\documentclass[letterpaper,11pt,reqno]{amsart} 
\usepackage[portrait,margin=1in]{geometry} 
\allowdisplaybreaks

\usepackage{mathrsfs,xfrac} 
\usepackage[colorlinks=true,linkcolor=blue,citecolor=blue,urlcolor=blue]{hyperref} 
\usepackage{amsmath,amssymb,amsthm,amsfonts,amsbsy,latexsym,dsfont,color,graphicx,enumitem,mathtools}
\usepackage{upgreek}
\usepackage{caption}
\usepackage{regexpatch}
\usepackage[capitalise]{cleveref}
\usepackage{comment}
\makeatletter
\usepackage{todonotes}
\xpatchcmd{\@todo}{\setkeys{todonotes}{#1}}{\setkeys{todonotes}{inline,#1}}{}{}
\makeatother

\theoremstyle{plain}
\newtheorem{thm}{Theorem}[section]
\newtheorem{lem}[thm]{Lemma}
\newtheorem{cor}[thm]{Corollary}
\newtheorem{prop}[thm]{Proposition}
\theoremstyle{definition}
\newtheorem{defn}[thm]{Definition}
\newtheorem{rem}[thm]{Remark}

\newtheorem{ex}[thm]{Example}

\newtheorem{ques}[thm]{Question}

\renewcommand{\le}{\leqslant}  
\renewcommand{\ge}{\geqslant}

\newcommand{\wt}{\widetilde}

\newcommand{\ind}{\mathds{1}}

\newcommand{\abs}[1]{\left\vert#1\right\vert}

\def\qed{ \hfill $\blacksquare$}  
    
           \let\go=\omega


\newcommand{\cG}{\mathcal{G}}

\newcommand{\cN}{\mathcal{N}}
\newcommand{\cP}{\mathcal{P}}\newcommand{\cR}{\mathcal{R}}
\newcommand{\cS}{\mathcal{S}}



\newcommand{\mvO}{\boldsymbol{O}}


\newcommand{\bN}{\mathbb{N}}
\newcommand{\bR}{\mathbb{R}}

\newcommand{\bZ}{\mathbb{Z}}        



\newcommand{\N}{\mathds{N}}
\DeclareMathOperator{\E}{\mathds{E}}
\DeclareMathOperator{\pr}{\mathds{P}}

\DeclareMathOperator{\var}{Var}

\DeclareMathOperator{\Aut}{Aut} 
\newcommand{\wh}[1]{\widehat{#1}}

\newcommand{\w}{\mathbf{w}}
\newcommand{\dist}{\mathrm{dist}}

\renewcommand{\tau}{\uptau}
\newcommand{\htau}{\wh{\tau}}


\newcommand{\clustG}{{\cG^\mathrm{cl}}}

\begin{document}
\title[Heavy repulsion of clusters]{Heavy repulsion of clusters in Bernoulli percolation}
\author[Bell]{Sasha Bell}
\address{
Sasha Bell\\
Mathematics and Statistics Department \\
McGill University \\
Montr\'{e}al, QC \\
Canada
}
\email{sasha.bell@mail.mcgill.ca}

\author[Chu]{Tasmin Chu}
\address{
Tasmin Chu\\
Mathematics and Statistics Department \\
McGill University \\
Montr\'{e}al, QC \\
Canada
}
\email{tasmin.chu@mail.mcgill.ca}

\author[Rodgers]{Owen Rodgers}
\address{
Owen Rodgers\\
Mathematics and Statistics Department \\
McGill University \\
Montr\'{e}al, QC \\
Canada
}
\email{owen.rodgers@mail.mcgill.ca}

\author[Terlov]{Grigory Terlov}
\address{
Grigory Terlov \\
Department of Statistics and Operations Research\\
University of North Carolina\\
Chapel Hill, NC \\
USA
}
\email{gterlov@unc.edu}

\author[Tserunyan]{Anush Tserunyan}
\address{
Anush Tserunyan \\
Mathematics and Statistics Department \\
McGill University \\
Montr\'{e}al, QC \\
Canada
}
\email{anush.tserunyan@mcgill.ca}

\begin{abstract} 
We study Bernoulli$(p)$ percolation on (non)unimodular quasi-transitive graphs and prove that, almost surely, for any two heavy clusters $C$ and $C'$, the set of vertices in $C$ within distance one of $C'$ is light, i.e.\ it has finite total weight. This is a significant step towards resolving a longstanding question posed by H\"aggstr\"om, Peres, and Schonmann, and a generalization of a theorem of Tim\'ar, who proved the same result in the unimodular setting. Our proof adapts Tim\'ar's approach but requires developing weighted analogues of several classical unimodular results. This presents nontrivial challenges, since in a nonunimodular graph a subtree with infinitely many ends may be hyperfinite or even light. To overcome this, we employ newly developed machinery from the theory of measure-class-preserving equivalence relations and graphs. In particular, we establish a weighted generalization of a theorem of Benjamini, Lyons, and Schramm on the existence of an invariant random subgraph with positive weighted Cheeger constant, a result of independent interest.
\end{abstract}

\maketitle

\tableofcontents

%
\section{Introduction and main results}
%
In a seminal 1996 paper \cite{BSbeyond}, Benjamini and Schramm laid the groundwork for modern percolation theory by extending its scope beyond lattices and trees, traditionally of interest in physics, to encompass general \textit{connected, locally finite, quasi-transitive} graphs. 
Much of the theory has been developed for the more restricted subclass of unimodular graphs, which includes Cayley graphs. 
In contrast, nonunimodular quasi-transitive graphs exhibit an inherent asymmetry, which can be quantified using a relative weight function on the vertices, derived from the Haar modulus \cite{Haggstrom99,BLPS99inv,Timar06nonu,LyonsBook}. 
Although such asymmetry often complicates the analysis it also provides an additional structure. 
In a breakthrough work \cite{Hutchcroft20}, Hutchcroft leveraged the existence of a nonunimodular subgroup of automorphisms for quasi-transitive graphs to compute values of the critical exponents and establish a nontrivial nonuniqueness percolation phase, something that is not understood in full generality for unimodular graphs.
As a consequence, most results in the nonunimodular setting have been obtained using a rather different set of techniques, tailored to exploit this additional structure.
However, there is intrinsic interest in developing a theory of percolation on nonunimodular graphs that is coherent with the well-established theory for unimodular graphs. This endeavor is particularly timely in view of recent parallel advances in measured group/graph theory \cite{CTT22,AnushRobin}. Since unimodular techniques do not take into account the behavior of the weight function, such extensions are often done by introducing a weighted variant of a relevant notion to enable the rest of the machinery to go through; e.g.\ \cite{BLPS99inv,Pengfei18,CTT22,wamen}. In this paper, we build on these recent insights to resolve one of the remaining cases of the question posed by Häggström, Peres, and Schonmann \cite[p.87]{Haggstrom99} concerning the cluster repulsion. 

To state our main result we briefly introduce percolation terminology. As above, a \textbf{bond percolation} process on a graph $G=(V,E)$ is a probability measure $\mathbf{P}$ on $2^E$. 
We refer to elements $\omega \in 2^E$ as \textbf{configurations} and we say that an edge $e \in E$ is \textbf{present} (or \textbf{open}) in $\omega$ if $e \in \omega$. Otherwise, we say $e$ is \textbf{absent} (or \textbf{closed}).
The connected components of $\omega$ are called \textbf{clusters}.
For $p \in [0,1]$, a bond percolation process on $G$ is called $\mathrm{Bernoulli}(p)$ if every edge is present in a configuration independently with probability $p$. 
We denote the measure associated with $\mathrm{Bernoulli}(p)$ bond percolation by $\pr_p$ and omit the word ``bond''. A graph $G$ is called \textbf{unimodular} if its automorphism group $\Gamma\coloneqq \Aut(G)$ is unimodular. Given a Haar measure $m$ on $\Gamma$ we define the \textbf{relative weight function} of a vertex $y$ with respect to a vertex $x$ as
\begin{equation}\label{def:haarweights}
\w_\Gamma^x(y)\coloneqq\w^x(y) \coloneqq  m(\Gamma_y)/m(\Gamma_x),
\end{equation} 
where $\Gamma_v \coloneqq  \{\gamma\in\Gamma \mid \gamma v=v\}$ is the stabilizer of $v\in V$. By \cite{Trofimov}, $\Gamma$ is unimodular if and only if for all $x, y \in V$ in the same orbit, it holds that $\w^x(y)=1$. 
A set of vertices is called \textbf{light} (resp.\ ~\textbf{heavy}) if the sum of the weights of its elements is finite (resp.\ infinite). 
Since $\w$ is a cocycle these
definitions do not depend on the reference point. In unimodular quasi-transitive graphs $\w$ can take at most finitely many values, and thus any infinite cluster is automatically heavy. On the other hand, in the nonunimodular case, light clusters can be either finite or infinite. Moreover, \cite[Theorem 4.1.6]{Haggstrom99} implies that infinite light clusters cannot coexist with heavy ones in Bernoulli percolation, yielding four percolation phases for any quasi-transitive graph (see Figure~\ref{Fig:phases}).

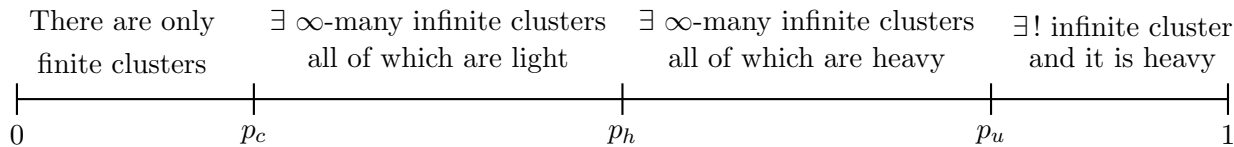
\begin{figure}[htb]
\begin{center}
	\begin{tikzpicture}[thick, scale=0.7]
	
	\draw [-] (-12,0) to (11,0);
 
        \draw (-12,0.3) to (-12,-0.3);
        \draw (-12,-0.3) node[below] {$0$};
        \draw (-7.5,0.3) to (-7.5,-0.3);
	\draw (-7.5,-0.3) node[below] {$p_c$};
        \draw (-0.5,0.3) to (-0.5,-0.3);
	\draw (-0.5,-0.3)node[below] {$p_h$};
        \draw (6.5,0.3) to (6.5,-0.3);
	\draw (6.5,-0.3) node[below] {$p_u$};
        \draw (11,0.3) to (11,-0.3);
        \draw (11,-0.3) node[below] {$1$};

        \draw (-10,1) node[above] {There are only};
        \draw (-10,0.3) node[above] {finite clusters};
        \draw (-4,1) node[above] {$\exists$ $\infty$-many infinite clusters};
        \draw (-4,0.3) node[above] {all of which are light};
        \draw (3,1) node[above] {$\exists$ $\infty$-many infinite clusters};
        \draw (3,0.3) node[above] {all of which are heavy };
        \draw (9,1) node[above] {$\exists\,!$ infinite cluster};
        \draw (9,0.3) node[above] {and it is heavy};
    \end{tikzpicture}
\end{center}
\caption{The four phases of Bernoulli$(p)$ percolation, distinguished by the number of light/heavy infinite clusters.}
\label{Fig:phases}
\end{figure}

In 1999 H\"aggstr\"om, Peres, and Schonmann \cite{Haggstrom99} introduced a notion of \textbf{cluster repulsion}: \textit{a graph $G$ is said to exhibit cluster repulsion at $p\in[0,1]$ if any two infinite clusters come within
unit distance from each other in at most finitely many places $\pr_p$-almost surely}. They then asked the following question:
\begin{ques}[{\cite[p.~87]{Haggstrom99}}]\label{ques:HPS}
    Does a quasi-transitive graph necessarily exhibit cluster repulsion for any value of $p\in[0,1]$?
\end{ques}

H\"aggstr\"om, Peres, and Schonmann remarked that for any graph $G$, if the connectivity function $\varphi_{u,v}(p)\coloneqq \pr_p(u\leftrightarrow v)$ is continuous at $p$ for all pairs of vertices $u,v\in V$, then cluster repulsion holds at $p$. Thus, for any graph $G$, it follows from the monotonicity of $\varphi_{u,v}$ that 
there can be at most countably many values of $p$ at which cluster repulsion fails (see \cite[Proposition 3.2]{TasminThesis}). They also pointed out that the answer to their question is negative for non-quasi-transitive graphs. 
In 2006 Tim\'ar showed the positive answer to Question~\ref{ques:HPS} assuming that the graph is unimodular \cite{Timar06neighb}. Even though percolation on nonunimodular graphs has received a considerable amount of attention especially recently (see \cite{Timar06nonu, Pengfei18, Hutchcroft20, CTT22, HutchcroftPan24dim, HutchcroftPan24rel, wamen} and the references therein) Question~\ref{ques:HPS} still remains open. In this work we show that for any quasi-transitive graph, infinite clusters of Bernoulli$(p)$ percolation exhibit \textbf{heavy repulsion} for any $p\in[0,1]$, meaning \textit{for any two neighboring clusters $C$ and $C'$ the  set of vertices in $C$ that are within distance one of $C'$ is light}, making the first step towards the positive answer in the nonunimodular setting. Of course, the question only makes sense when there are infinitely many heavy clusters, which can happen only for $p\in(p_h,p_u]$ (see \cite[Proposition 4.7]{Pengfei18} and \cite[Theorem 1.8]{wamen} for $p=p_h$ case). 
\begin{thm}\label{thm:heavyrepulsion}
   Suppose $G$ is a connected locally finite quasi-transitive nonunimodular graph. Then any two heavy clusters come within unit distance of each other only in a light set of vertices $\pr_p$-almost surely.
\end{thm}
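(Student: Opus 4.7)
Our plan is to argue by contradiction, adapting Tim\'ar's strategy from the unimodular setting. Assume, with positive $\pr_p$-probability, that there exist two distinct heavy clusters $C, C'$ such that the contact set $S_{C,C'} \coloneqq \{v \in C : \exists u \in C',\, v \sim u\}$ is heavy. The first step is to upgrade this to an almost sure statement, uniform over pairs of heavy clusters, via a weighted analogue of the Lyons--Schramm cluster indistinguishability theorem: any $\Gamma$-invariant, insertion/deletion-tolerant property of clusters should be constant across heavy clusters $\pr_p$-almost surely, where ``property'' is now allowed to depend on weighted rather than cardinal data. Formulating and proving this weighted indistinguishability is the first technical task.

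Next, I plan to treat the cluster graph $\clustG$ (heavy clusters as vertices, neighboring pairs as edges) as an invariant random weighted object and to combine two pieces of machinery. First, the weighted mass transport principle relates the total weights of $S_{C,C'}$ and $S_{C',C}$ across a $\clustG$-edge; second, the weighted Benjamini--Lyons--Schramm theorem advertised in the abstract yields, inside each heavy cluster, an invariant random subgraph with positive \emph{weighted} Cheeger constant. The Cheeger bound supplies a quantitative isoperimetric inequality, which, applied to truncations of $S_{C,C'}$, forces a substantial weighted edge-boundary inside $C$. Mass transport and the bipartite contact structure across a $\clustG$-edge then route this boundary through edges that cannot support it---the desired contradiction.

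The principal obstacle is the phenomenon stressed in the abstract: in nonunimodular graphs a subtree with infinitely many ends can be hyperfinite or even light, so the qualitative ``infinite boundary implies non-amenability'' reflex that drives Tim\'ar's proof fails outright. The weighted Cheeger input is what rescues an isoperimetric estimate, but extracting such a subgraph \emph{inside} the highly correlated substructure carved out of $G$ by the contact between two clusters---rather than in $G$ itself---is expected to be the most delicate step. A secondary difficulty is that the Lyons--Schramm indistinguishability proof relies on insertion/deletion tolerance together with tail triviality, and one must verify that these mechanisms still force constancy of properties measured by weighted sums; this is where the newly developed machinery from measure-class-preserving equivalence relations alluded to in the abstract is expected to enter.
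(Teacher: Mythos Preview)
Your outline has the right large-scale shape (contradiction, positive weighted Cheeger constant, mass transport) but skips the step that actually does the work. The weighted BLS theorem in the abstract produces a subgraph $\omega'\subseteq\omega$ with $\Phi_V^{\w}(\omega')>0$ only once the relevant process is known to be \emph{nonhyperfinite}; it does not come for free from the mere existence of heavy clusters, and you never establish nonhyperfiniteness of the touching structure. This is exactly where the paper's new content lies. The paper first replaces the non-$\Gamma$-invariant touching set $\tau(C,C')$ (which requires distinguishing $C'$) by an invariant random partition $\htau_\omega$ into ``touching classes'' (each vertex picks one neighboring cluster uniformly at random), then shows via insertion/deletion arguments that heavy touching forces some cluster to have $\ge 3$ $\w$-\emph{nonvanishing} ends along its $\htau_\omega$-class, and finally invokes a new relativized (to a subequivalence relation) version of the mcp ends-dichotomy to conclude that $\htau_\omega$ is nonhyperfinite. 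The notion of $\w$-nonvanishing ends is the pivotal object here and is absent from your plan.

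Your step ``apply the Cheeger bound to truncations of $S_{C,C'}$'' is also not coherent as written: the subgraph $\omega'$ with positive Cheeger constant lives in the whole configuration and need not meet $S_{C,C'}$ in any controlled way, so its isoperimetric inequality says nothing about the boundary of $S_{C,C'}$ inside $C$. The paper instead applies the Cheeger result to a bounded-label graph $H_N(\htau_\omega)$ on the (now known to be nonhyperfinite) touching classes themselves, deduces exponential weighted growth of a touching class in the \emph{cluster metric}, and only then runs Tim\'ar's geodesic $1/k^2$ mass transport to obtain the contradiction. Finally, the weighted indistinguishability you flag as a ``first technical task'' is already available (Tang, 2018) and is used in the paper only to show regularity of the cluster-neighboring graph, not in the way you describe.
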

Although our proofs work in a more general setting, where the assumption of nonunimodularity is replaced by the existence of nonunimodular closed subgroup of $\Aut(G)$, similarly to \cite{BLPS99inv,Hutchcroft20,wamen}, we chose not to do so to avoid redundancy, as the unimodular case of Question~\ref{ques:HPS} is already covered in \cite{Timar06neighb}. 
Moreover, since two infinite clusters can share a boundary only in a $2$-connected subgraph, it is natural to restrict attention to the maximal 2-connected component of $G$, or equivalently assume that $G$ is 2-connected. 
If such a component of $G$ contains vertices whose relative weights, as in \eqref{def:haarweights}, take only finitely many values, such a component must be unimodular and hence also falls into the scope of \cite{Timar06neighb}. Thus the main contribution of the present work is when $G$ is $2$-connected and nonunimodular. 

While it is natural to expect that percolation results for infinite clusters in unimodular graphs extend to their weighted analogs for heavy clusters in nonunimodular graphs, such generalizations often require new ideas (see, e.g., \cite{Pengfei18,CTT22,wamen}). \cref{thm:heavyrepulsion} provides another example: although our proof follows the overall structure of Timár’s argument from \cite{Timar06neighb}, the novelty lies in establishing weighted analogs of the results that his proof relied on, and, when such extensions were not available, in replacing them with entirely different arguments.
We do so by leveraging the connection with measured group/graph theory (enabled by the cluster graphing construction).
Notably, one of these results is the following extension of a theorem of Benjamini, Lyons, and Schramm \cite[Theorem 1.1]{BLS-perturb}, which is of independent interest. In fact, we prove a more general statement in \cref{thm:pos_Cheeger}.

\begin{thm}\label{thm:pos_Cheeger_Bernoulli}
    Let $G$ be a connected locally finite quasi-transitive graph and let $\w$ be the relative weight function induced by a closed subgroup $\Gamma\subseteq\Aut(G)$ as in \eqref{def:haarweights}. Suppose $G$ is $\w$-nonamenable. Then for $\pr_p$-a.e.\ configuration $\go$ that contains a heavy cluster, there is a subgraph $\go' \subseteq \omega$ such that $\Phi_V^{\w}(\omega') > 0$ and such that $(\omega, \omega')$ are jointly $\Gamma$-invariant.
\end{thm}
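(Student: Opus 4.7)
The plan is to adapt the argument of Benjamini--Lyons--Schramm from \cite{BLS-perturb} to the nonunimodular setting. The core idea is to let $\omega'$ be a jointly invariant sub-sampling of $\omega$ that retains the heavy-cluster structure while acquiring quantitative expansion inherited from the $\w$-nonamenability of $G$. Concretely, I would introduce an independent Bernoulli$(q)$ bond process $\eta$ on $G$, so that $(\omega,\eta)$ is jointly $\Gamma$-invariant, and set $\omega' \coloneqq \omega \cap \eta$; joint invariance of $(\omega,\omega')$ is then automatic. For $q$ sufficiently close to $1$, a stability argument (via FKG and comparison with Bernoulli$(pq)$ percolation on $G$) ensures that $\omega'$ still has a heavy cluster with positive probability on the event that $\omega$ does.

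The heart of the argument is the quantitative claim that finite subsets $A$ of any heavy cluster of $\omega'$ satisfy $|\partial_V^{\omega'} A|_\w \ge c\,|A|_\w$ almost surely, for some constant $c = c(q,\Phi_V^{\w}(G)) > 0$. To establish this, I would argue by contradiction: if $\Phi_V^{\w}(\omega') < c$ on an event of positive probability, then on that event there exist finite sets $A$ whose $\omega'$-vertex boundary has small $\w$-mass compared to $|A|_\w$. Because an edge of $\omega$ survives in $\omega'$ independently with probability $q$, vertices of $\partial_V^G A$ connected to $A$ by $\omega$-edges lie in $\partial_V^{\omega'} A$ with conditional probability at least $pq$. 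A weighted mass-transport argument, adapted to the nonunimodular setting, then allows one to compare the expected $\w$-weighted sizes of $\partial_V^{\omega'} A$ and $\partial_V^{G} A$, yielding an upper bound on $|\partial_V^G A|_\w$ in terms of $|\partial_V^{\omega'} A|_\w$. Combined with the hypothesis $|\partial_V^G A|_\w \ge \Phi_V^{\w}(G)\,|A|_\w$, this produces the desired contradiction for $q$ chosen sufficiently close to $1$.

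The main obstacle, and the conceptual novelty of the argument, lies in implementing the weighted mass transport principle correctly. In the unimodular case, a symmetric MTP relates per-orbit averages of $|A|$ and $|\partial A|$, enabling direct comparison with $\Phi_V(G)$; in the nonunimodular case, the cocycle factor $\w(x)/\w(y)$ asymmetrizes the transport reversal, so the correspondence between weighted sums over $A$ and over its boundary must be handled orbit-by-orbit. Here I would rely on the measure-class-preserving equivalence relation machinery from \cite{CTT22, AnushRobin}: encoding $(G,\Gamma,\w)$ together with $(\omega,\omega')$ as a measured graphing and applying the weighted MTP as a mass-conservation identity for transports along the equivalence relation, under which $\w$-nonamenability of $G$ translates into a genuinely pointwise Cheeger lower bound. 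A further subtlety, emphasized in the introduction, is that in the nonunimodular regime subgraphs with infinitely many ends can be hyperfinite or even light, so the passage from expected-value statements over orbits to almost-sure Cheeger bounds must restrict to the heavy portion of $\omega'$; this is precisely where the heavy-cluster hypothesis of the theorem is essential.
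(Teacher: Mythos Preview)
Your proposal has a genuine gap at its core, and the route you sketch is quite different from the paper's.

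The paper does \emph{not} construct $\omega'$ explicitly. Instead it deduces \cref{thm:pos_Cheeger_Bernoulli} in two lines from two black boxes: (i) by \cite[Theorem~6.6]{wamen}, Bernoulli percolation on a $\w$-nonamenable graph conditioned on containing a heavy cluster is not hyperfinite; (ii) by the general \cref{thm:pos_Cheeger} (proved via the cluster graphing construction and Kaimanovich's characterization, \cref{thm:Kai}), non-hyperfiniteness of $\mathbf{P}$ is equivalent to the existence of a jointly invariant $\omega'$ with $\Phi_V^{\w}(\omega')>0$ on a positive-measure set. The existence of $\omega'$ is thus extracted from the Borel-equivalence-relation machinery rather than built by hand.

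Your approach, by contrast, posits a concrete candidate $\omega'=\omega\cap\eta$ and tries to verify $\Phi_V^{\w}(\omega')>0$ directly. The argument you offer for this step does not go through. The claim that ``vertices of $\partial_V^G A$ connected to $A$ by $\omega$-edges lie in $\partial_V^{\omega'}A$ with conditional probability at least $pq$'' does not control the full $G$-boundary: for a finite $A$ inside a cluster of $\omega'$, most of $\partial_V^G A$ may consist of vertices with no $\omega$-edge to $A$ at all, so the comparison with $\Phi_V^{\w}(G)$ is not available. More fundamentally, mass transport yields identities of \emph{expectations}; it does not, by itself, upgrade an expected boundary-to-volume ratio to an almost-sure lower bound on the Cheeger constant of a random graph. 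You acknowledge this as ``the main obstacle'' and then invoke the mcp machinery of \cite{CTT22,AnushRobin} to close it, but that is precisely the content of \cref{thm:pos_Cheeger} and \cref{thm:Kai}: once you are willing to pass through the cluster graphing and Kaimanovich's theorem, the explicit thinning $\omega\cap\eta$ and the FKG/boundary-comparison heuristics become superfluous, and the paper's two-line argument is what remains.
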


Here $\w$\textbf{-amenability} of a graph is defined with respect to the \textbf{weighted Cheeger constant} $\Phi_V^{\w}$, which was introduced in \cite{BLPS99inv} (and further considered in \cite{wamen}; see \cite{kaimanovich1997amenability} for the analogous isoperimetric constant in the context of measured group/graph theory)
\begin{equation}\label{eq:wCheeger}
    \Phi^{\w}_V(G)\coloneqq \inf_{F\subseteq V\atop \abs{F}<\infty}\frac{\w^o(\partial_V F)}{\w^o(F)}.
\end{equation}

Finally, we mention an easy yet useful result on \textbf{relentless merging}. This notion is related to the cluster repulsion and was also introduced in \cite{Haggstrom99}. First consider the monotone coupling of Bernoulli$(p)$ percolation processes and denote the corresponding configurations by $\go_p$.
Relentless merging then says that \textit{a.s.\ for any $p_1<p_2$ such that Bernoulli$(p_i)$ admit infinite clusters, any infinite cluster in $\go_{p_2}$ contains infinitely many infinite clusters from $\go_{p_1}$.} 
This implies that all infinite clusters are born at the same and then only merge as $p$ grows.
The authors then prove that the relentless merging holds for unimodular graphs \cite[Theorem 1.5]{Haggstrom99}. Their proof relies on a proposition stating that in the regime with infinitely many infinite clusters, $\pr_p$-a.s.\ every infinite cluster is at unit distance from infinitely many other infinite clusters
\cite[Proposition 7.1]{Haggstrom99}. They then asked:

\begin{ques}[{\cite[p.~87]{Haggstrom99}}]
Can the unimodularity assumption made in \cite[Theorem 1.5]{Haggstrom99} and \cite[Proposition 7.1]{Haggstrom99} be dropped? 
\end{ques}

We answer this question in the affirmative (see \cref{merging}) in all cases besides the one where \textit{both} $\go_{p_1}$ and $\go_{p_2}$ admit only light infinite clusters. Intuitively our result shows that all heavy clusters are born at the same time due to the coalescence of infinitely many infinite-light clusters and then only merge as $p$ grows further. It is not difficult to adapt the original arguments from \cite{Haggstrom99} to generalize these statements for heavy clusters. However, since for the proof of \cref{thm:heavyrepulsion} we needed a generalization \cite[Proposition 7.1]{Haggstrom99} (see \cref{neighb_rel}) that works for more general neighboring relations between heavy clusters, we decided to include the relentless merging for completeness.

\begin{prop}\label{thm:rel_merg}
Suppose $G$ is a connected locally finite quasi-transitive nonunimodular graph. Then under the monotone coupling, a.s.\ for any $p_1\in (p_c,p_u)$ and $p_2\in(p_h,1]$ such that $p_1 < p_2$, any infinite cluster in $\go_{p_2}$ contains infinitely many infinite clusters from $\go_{p_1}$.
\end{prop}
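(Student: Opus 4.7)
The plan is to adapt the Häggström--Peres--Schonmann proof of \cite[Theorem~1.5]{Haggstrom99} to the nonunimodular setting, substituting \cref{neighb_rel} for their Proposition~7.1. I work throughout with the standard monotone coupling $\set{\omega_p}_{p\in[0,1]}$. By intersecting countably many full-measure events along a countable dense set of admissible pairs $(p_1,p_2)$ and using standard monotonicity/continuity arguments to extend to the remaining pairs, it suffices to prove, for each fixed pair $p_1\in(p_c,p_u)$, $p_2\in(p_h,1]$ with $p_1<p_2$, that a.s.\ every infinite $\omega_{p_2}$-cluster contains infinitely many infinite $\omega_{p_1}$-clusters. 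The argument then splits into two steps.

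\emph{Step 1 (seeding).} I first show that a.s.\ every infinite $\omega_{p_2}$-cluster contains at least one infinite $\omega_{p_1}$-cluster. Fix any vertex $v_0$; since $p_1>p_c$, with positive probability $v_0$ lies in an infinite $\omega_{p_1}$-cluster and therefore, because $\omega_{p_1}\subseteq\omega_{p_2}$, in an infinite $\omega_{p_2}$-cluster containing such a cluster. The event ``the infinite $\omega_{p_2}$-cluster of $v_0$ contains an infinite $\omega_{p_1}$-cluster'' is a $\Gamma$-invariant cluster property of the jointly insertion-tolerant $\Gamma$-invariant process $(\omega_{p_1},\omega_{p_2})$ (insertion tolerance of the joint process being visible from the standard IID-uniform representation of the coupling). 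Applying the weighted/nonunimodular form of cluster indistinguishability available from the framework developed elsewhere in the paper promotes this positive-probability statement to the almost-sure one for every infinite $\omega_{p_2}$-cluster.

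\emph{Step 2 (proliferation).} Let $C$ be an infinite $\omega_{p_2}$-cluster and $K\subseteq C$ an infinite $\omega_{p_1}$-cluster furnished by Step~1. By \cref{neighb_rel} applied at parameter $p_1$, there are infinitely many distinct infinite $\omega_{p_1}$-clusters $K_1,K_2,\dots$ each at unit distance from $K$. For each $i$, pick an edge $e_i$ of $G$ joining $K$ to $K_i$; since $K\neq K_i$, we must have $e_i\notin\omega_{p_1}$. Conditional on $\omega_{p_1}$, the indicators $\ind\set{e_i\in\omega_{p_2}}$ are independent Bernoulli random variables with parameter $q=(p_2-p_1)/(1-p_1)>0$, so the second Borel--Cantelli lemma yields that a.s.\ infinitely many $e_i$ are present in $\omega_{p_2}$. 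Each such $e_i$ merges $K_i$ into the $\omega_{p_2}$-cluster of $K$, namely $C$, so $C$ contains $K$ together with infinitely many of the $K_i$, as required.

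\emph{Main obstacle.} Step~1 is the technical heart of the argument: without a seed, Step~2 has nothing to amplify. In the unimodular case of \cite{Haggstrom99} it follows from insertion tolerance together with Lyons--Schramm cluster indistinguishability, but in our nonunimodular setting naive symmetry fails because vertices carry widely varying weights, and the correct substitute is the weighted indistinguishability/mass-transport machinery developed in this paper (the same framework underlying the proofs of \cref{thm:heavyrepulsion} and \cref{thm:pos_Cheeger_Bernoulli}). Once Step~1 is in hand, the remaining adaptation of the Häggström--Peres--Schonmann argument is, as the authors already remark, essentially routine.
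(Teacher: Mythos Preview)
Your Step~2 has a genuine gap when $p_1\in(p_c,p_h]$. \cref{neighb_rel} is a statement about \emph{heavy} clusters: its proof invokes indistinguishability of heavy clusters (\cite[Theorem~1.1]{Pengfei18}) at the very first line. But for $p_1\le p_h$ the infinite $\omega_{p_1}$-clusters are light, so you cannot conclude that your seed cluster $K$ has infinitely many infinite $\omega_{p_1}$-neighbours, and the Borel--Cantelli amplification has nothing to act on. There is no available indistinguishability theorem for light clusters to fall back on, and it is not at all clear that an infinite light cluster must touch infinitely many other infinite clusters.

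The paper treats this regime with a completely different argument: when $p_1\in(p_c,p_h]$ all $p_1$-clusters are light, so each contains a nonempty finite set of vertices of maximal weight; if a heavy $p_2$-cluster $C$ contained only finitely many infinite $p_1$-clusters, the union of those maximal-weight sets would be a finite, invariantly defined subset of the heavy set $C$, contradicting the tilted mass transport principle. For $p_1\in(p_h,p_u)$ your outline is essentially the paper's: one feeds \cref{inf touching} (a corollary of \cref{neighb_rel}) into the Häggström--Peres--Schonmann scheme. Note also that your Step~1 is more cleanly handled by citing \cite[Theorem~4.1.3]{Haggstrom99}, which already covers general quasi-transitive graphs and avoids the need to justify indistinguishability for the coupled process.
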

\subsection{Outline of the argument and the main contributions}
To derive the cluster repulsion for unimodular transitive graphs \cite{Timar06neighb} Tim\'ar used the following three steps. Working towards a contradiction, first, he used the insertion and deletion tolerance techniques to show that there must be a touching set (i.e.\ the set of points in a cluster that are within unit distance of another cluster) with at least three ends. In the second step, Tim\'ar constructed a nonhyperfinite forest on a set of trifurcation vertices for the touching set of interest. Here, the argument crucially relies on \cite[Proposition 7.2]{BLPS99inv}. From there, the contradiction follows from an ingenious application of the mass transport principle that utilizes the fact that such a nonhyperfinite forest is of exponential growth in the metric of the cluster.

Although our proof of \cref{thm:heavyrepulsion} largely follows the same steps, each of them requires significant additional care and new ideas.
The first step is the easiest to adapt. Since regular ends do not capture the nonunimodular structure of the graph, instead, we work with $\w$-nonvanishing ends. This notion of weighted ends was recently introduced in \cite{AnushRobin,CTT22} and has been shown to be particularly useful in deriving weighted generalizations of various results related to amenability and hyperfiniteness, both in percolation theory and measured group theory. To address subtleties related to avoiding infinite-light touching sets in this step, we prove a general lemma about an abstract symmetric neighboring relation of clusters (\cref{neighb_rel}). In particular, it directly generalizes \cite[Proposition 7.1]{Haggstrom99} and, in turn, enables the proof of \cref{thm:rel_merg}.

The second step requires a completely different approach from that used in the unimodular proof. This is mainly because the results such as \cite[Proposition 7.2]{BLPS99inv} are not available in the nonunimodular setting, and it remains unclear whether a natural extension involving an appropriate notion of a weighted degree can even be developed. In fact, this is related to a major open question in measured group theory about developing the mcp theory of cost. To bypass this issue, we use a recent result for countable Borel equivalence relations 
that provides a generalization of \cite[Proposition 3.24]{CTT22} that is relative to a subrelation (see \cref{thm:nonamenable}).

Finally, to adapt Tim\'ar’s mass transport idea to the nonunimodular setting, we show that nonhyperfinite graphs must exhibit exponential growth of weights along a sequence of annuli. To do this, we again rely on the measured group theoretic approach and apply a result of Kaimanovich, which in turn implies \cref{thm:pos_Cheeger_Bernoulli} highlighted above.


\subsection*{Acknowledgments}
We would like to thank Louigi Addario-Berry, Ruiyuan (Ronnie) Chen, \'Ad\'am Tim\'ar, and Robin Tucker-Drob for many insightful conversations, some of which were not directly related to this project but nonetheless positively affected it.
S.B.~ and T.C.~were supported by NSERC CGS-M and FRQNT Bourse de formation à la maîtrise.
G.T.~was supported in part by the RTG award grant (DMS-2134107) from the NSF and by the ERC Synergy Grant No. 810115 - DYNASNET.
A.Ts.~was supported by NSERC Discovery Grant RGPIN-2020-07120.

%
\section{Preliminaries}\label{sec:prelim}
%

\subsection{Percolation theoretic tools and invariant random partitions}\label{sec:prelim_perc}
We now recall essential percolation theoretic definitions and tools and fix the notation for the remainder of the paper.

Given a set of configurations $A\subseteq 2^E$ and an edge $e\in E$, let $\Pi_eA=\{\omega\cup \{e\}\mid \omega\in A\}$ and $\Pi_{\neg e}A=\{\omega\setminus \{e\}\mid \omega\in A\}$. 
A bond percolation process $\mathbf{P}$ is called \textbf{insertion} (resp.\ \textbf{deletion tolerant}) if
$\mathbf{P}(\Pi_eA)>0$ (resp.\ $\mathbf{P}(\Pi_{\neg e}A)>0$) for every $e \in E$ and every non-null measurable set $A \subseteq 2^E$. Consider Bernoulli$(p)$ percolation on $G$. Then for every edge $e \in E$ and every measurable $A \subseteq 2^E$ we have
\begin{equation*}
    _p(\Pi_eA)\geq p\mathbf{P}_p(A) \qquad\text{and}\qquad \mathbf{P}_p(\Pi_{\neg e}A)\geq(1- p)\mathbf{P}_p(A).
\end{equation*}
In particular, this implies that Bernoulli bond percolation is both insertion and deletion tolerant.

One of the main tools in the field is the Tilted Mass Transport Principle (TMTP). The version below is a particular case of TMTP presented in \cite[Proposition 3.2]{Pengfei18} stated for a uniform measure on the roots.

\begin{thm}[Tilted Mass Transport Principle]\label{thm:TMTP}
    Let $G$ be a connected locally finite quasi-transitive graph and let $\w$ be the relative weight function induced by $\Gamma\subseteq\Aut(G)$ as in \eqref{def:haarweights}. Pick a complete set of representatives $\mvO = \{o_1, \cdots, o_L\}$ from each $\Gamma$-orbit and let $\rho$ be a random root uniformly sampled from $\mvO$. Then for any function $f:V^2 \to \bR^+$ that is invariant under the diagonal action of $\Gamma$ we have
    \[
    \E_{\rho}\sum_{v\in V} f(\rho,v)=\E_{\rho}\sum_{v\in V} f(v,\rho)\w^{\rho}(v).
    \]
\end{thm}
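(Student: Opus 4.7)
The plan is to reduce the TMTP to an orbit-by-orbit Haar integration argument and then reassemble via the cocycle property of $\w$. Throughout, the key analytic ingredient is the modular function $\Delta$ of the locally compact group $\Gamma$ (with left Haar measure $m$): because $G$ is locally finite, the vertex stabilizers $\Gamma_v$ are compact, so they have finite Haar measure and are contained in $\ker\Delta$. Since $f\ge 0$, Tonelli takes care of all exchanges of sums and integrals.

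First, for each pair of orbit representatives $o_i, o_j$, I would expand the inner sum as a Haar integral
\[
\sum_{v\in O_j} f(o_i, v) \;=\; \frac{1}{m(\Gamma_{o_j})}\int_\Gamma f(o_i,\gamma o_j)\,dm(\gamma),
\]
using that $\{\gamma: \gamma o_j = v\}$ is a coset of $\Gamma_{o_j}$ of Haar measure $m(\Gamma_{o_j})$. Next, the standard inversion identity $\int F(\gamma)\,dm(\gamma) = \int F(\gamma^{-1})\Delta(\gamma^{-1})\,dm(\gamma)$ combined with the diagonal invariance $f(o_i,\gamma^{-1}o_j) = f(\gamma o_i, o_j)$ converts this into
\[
\frac{1}{m(\Gamma_{o_j})}\int_\Gamma f(\gamma o_i, o_j)\,\Delta(\gamma^{-1})\,dm(\gamma).
\]
I would then decompose this integral over the orbit $O_i$ via cosets $\alpha_u\Gamma_{o_i}$ with $\alpha_u o_i = u$; because $\Delta|_{\Gamma_{o_i}}\equiv 1$, the contribution of each coset is $\Delta(\alpha_u^{-1})\,m(\Gamma_{o_i})$.

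The third step identifies $\Delta(\alpha_u^{-1})$ with $\w^{o_i}(u)$. Using $\Gamma_u = \alpha_u\Gamma_{o_i}\alpha_u^{-1}$ together with the standard conjugation formula $m(\alpha A \alpha^{-1}) = \Delta(\alpha^{-1})\,m(A)$ for left Haar, we obtain $m(\Gamma_u) = \Delta(\alpha_u^{-1})\,m(\Gamma_{o_i})$, hence $\w^{o_i}(u) = \Delta(\alpha_u^{-1})$. Combining the three steps yields the intermediate identity
\[
\sum_{v\in O_j} f(o_i, v) \;=\; \frac{m(\Gamma_{o_i})}{m(\Gamma_{o_j})}\sum_{u\in O_i} f(u, o_j)\,\w^{o_i}(u).
\]

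To finish, I would sum over $i$ and $j$, swap the dummy indices $i\leftrightarrow j$ on the right-hand side, and apply the cocycle identity $\w^{o_i}(u) = \tfrac{m(\Gamma_{o_j})}{m(\Gamma_{o_i})}\w^{o_j}(u)$ for $u\in O_j$; the stabilizer-measure prefactor cancels exactly, producing $\sum_i\sum_u f(u,o_i)\,\w^{o_i}(u)$. Dividing both sides by $L$ then gives the TMTP. The main thing to watch is the careful bookkeeping of the modular function through the inversion $\gamma \mapsto \gamma^{-1}$ and of the stabilizer-measure ratios across distinct orbits; the rest is a clean orbit-coset decomposition.
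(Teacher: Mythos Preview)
Your argument is correct. The orbit-by-orbit Haar decomposition, the inversion via the modular function, the identification $\Delta(\alpha_u^{-1})=\w^{o_i}(u)$ through $m(\alpha A\alpha^{-1})=\Delta(\alpha^{-1})m(A)$, and the final index swap with the cocycle cancellation all go through exactly as you describe; the only implicit hypothesis you use beyond the statement is that $\Gamma$ is closed in $\Aut(G)$ (so that stabilizers are compact open and $\Delta|_{\Gamma_v}\equiv 1$), which is the standing assumption in the paper.

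As for comparison: the paper does not actually prove this theorem. It is stated as a known tool and attributed to \cite[Proposition 3.2]{Pengfei18}, of which the present version is the special case where the random root $\rho$ is uniform on $\mvO$. Your write-up therefore supplies a self-contained proof where the paper offers only a citation. The advantage of your approach is that it is elementary and exposes precisely where nonunimodularity enters (through $\Delta$); the advantage of citing \cite{Pengfei18} is that one gets the statement for an arbitrary law on the root for free, which the paper later alludes to in the remark on alternative choices of the measure $\nu$ in the cluster graphing construction.
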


Given a configuration $\go$ of a Bernoulli$(p)$ percolation and clusters $C,C'$ in $\go$, let $\tau(C,C')$ denote the \textbf{touching set} of $C$ with respect to $C'$, that is the set of points in $C$ that are within distance one of $C'$.
\textit{We highlight that the set of vertices in a cluster that neighbors another fixed cluster is not invariant, as it requires distinguishing the other cluster.}
Thus, instead of working with entire touching sets it is more convenient to work with their subsampled version (that are also disjoint), where each vertex of $G$ picks one of its neighboring clusters and then we groups vertices based on the cluster they belong to and the choice they made. To make this precise we adopt the language of \textbf{invariant random partitions (IRP)} (also known as invariant random equivalence relations) introduced in \cite{RobinIRP,KechrisIRE}.
Let $\cP_G\subseteq 2^{V\times V}$ be the set of all partitions of $V$.
We say that $\mathbf{P}$ is an invariant random partition of $G$ if it is 
an $\Aut(G)$-invariant probability measure on $\cP_G$. In particular, any invariant percolation process induces an IRP of being in the same connected component.

We are interested in a particular IRP $\wh{\mathbf{P}}$ defined as follows. Given a configuration $\go$ of a Bernoulli$(p)$ percolation, let each vertex $x$ pick a cluster that is distance $1$ from $x$ uniformly at random (in particular the picked cluster does not contain $x$); if such a cluster does not exist then set $x$ to be a singleton in the corresponding configuration of our IRP $\htau_{\go}$, otherwise \textit{a pair of vertices 
$\{x,y\}$ is set to be in the same class of the partition $\htau_{\go}$ if and only if $x$ and $y$ are in the same cluster in $\go$ and picked the same cluster}. Note that since $\pr_p$ is invariant and for each configuration the class was defined in an invariant fashion, $\wh{\mathbf{P}}$ is indeed an IRP, in fact $(\htau_\go,\go)$ are jointly invariant. In particular, we will always think of $\htau_\go$ as a realization of a partition on top of a percolation configuration $\go$ and apply insertion/deletion techniques to the underlying $\go$ fixing the partition for all unaffected vertices. 
We will also refer to the classes of $\htau_\go$ as \textbf{touching classes} to distinguish them from the touching sets (as above, the set of all points where one cluster is withing distance 1 of another given cluster). Since every class in the partition is uniquely labeled by an ordered pair of two clusters, one that contains it and one that was picked by its elements, when it is important to highlight classes indexed by different clusters we denote the corresponding classes by $\htau_\go(C,C')$.

\subsection{Ends of graphs}\label{sec:ends}

Let $G=(V,E)$ be a connected locally finite graph. 
We say that a set of vertices $A$ is \textbf{end-convergent} in $G$ if for any finite subgraph $F\subset G$ all but finitely many elements of $A$ are contained in the same connected component of $G\setminus F$. 
Two end-convergent sets $A$ and $B$ are said to be equivalent if $A\cup B$ is end-convergent. An \textbf{end} of $G$ is an equivalence class of end-convergent sets. We say that a set of vertices \textbf{converges to an end} $\xi$ if it belongs to the equivalence class $\xi$. 

As we mentioned above existence of infinitely many ends is not as powerful of a property in the nonunimodular setting as it is in the unimodular one.
To address an this problem a new notion of a weighted end was introduced in \cite{CTT22, AnushRobin}. Given a weight function $\w:V\to \bR^+$ and a set $A\subseteq V(G)$, a set $B \subseteq V(G)$ is \textbf{$\w$-vanishing} (resp.\ \textbf{$\w$-vanishing along $A$}) if $\limsup_{x \in B} \w(x)=0$  (resp.\ $\limsup_{x \in B|_A} \w(x)=0$); otherwise $B$ is $\w$-\textbf{nonvanishing} (resp.\ \textbf{$\w$-nonvanishing along $A$}). Finally, an end $\xi$ of $G$ is \textbf{$\w$-vanishing} (resp.\ \textbf{$\w$-vanishing along $A$}) if any sequence of vertices $\{x_n\}_{n\in\bN}\subset V(G)$ that converges to $\xi$ is $\w$-vanishing (resp.\ \textbf{$\w$-vanishing along $A$}); otherwise $\xi$ is \textbf{$\w$-nonvanishing} (resp.\ \textbf{$\w$-nonvanishing along $A$}). See \cite[Section 5]{wamen} for discussion on various notions of weighted ends in applications to percolation theory.

\subsection{Measured group theoretic results}\label{sec:prelim_mcp}
Measured group theory studies groups via their measurable actions on a standard probability space $(X,\mu)$. Hence it is closely connected to the theory of locally countable Borel graphs and countable Borel equivalence relations (CBERs), since these objects arise as Schreier graphs and orbit equivalence relations of such actions, respectively. By the Feldman–Moore theorem \cite{Feldman-Moore} the converse also holds and every CBERs on a Polish space is the orbit equivalence relation of a Borel action of a countable discrete group. Naturally to every  locally countable Borel graphs one can associate a CBER, in which vertices are related if and only if they are in the same connected component. Conversely, for every CBER $\cR$ one can find a Borel graph $\cG$ on $(X,\mu)$ whose connectedness relation coincides with $\cR$ on a $\mu$-conull set (in such cases we call $\cG$ a \textbf{graphing} of $\cR$). 

A CBER $\cR$ on $(X,\mu)$ is \textbf{probability measure preserving} (\textbf{pmp}) (resp.\ \textbf{measure class preserving} or \textbf{mcp}) if every Borel automorphism on $X$ that respects $\cR$-classes preserves $\mu$ (resp.\ $\mu$-null sets). In fact, every CBER on a $(X,\mu)$ is mcp on a $\mu$-conull set \cite[Proposition 2.1]{Miller:thesis}.
The lack of invariance of mcp CBERs is quantified similarly to the tilted mass transport principle (as in ~\cref{thm:TMTP}); here the relative weights are given by the \textbf{Radon--Nikodym cocycle} $\w_\mu^y(x) : \cR \to \bR^+$ of the orbit equivalence relation $\cR$ with respect to the underlying probability measure $\mu$ (see \cite[Section 8]{KMtopics}).

\begin{thm}[MTP for CBERs]
Let $\cR$ be a mcp CBER on a standard probability space $(X,\mu)$ and $\w_\mu^y(x)$ is the Radon–Nikodym cocycle of $\cR$ with respect to $\mu$. Then for every $f : \cR \to [0, \infty]$
\begin{equation}\label{eq:BorelMTP}
    \int \sum_{z \in [x]_{\cR}} f(x,z) d\mu(x) = \int \sum_{z \in [y]_{\cR}} f(z,y) \w_\mu^y(z) d\mu(y),
\end{equation}
where $[x]_{\cR}$ denotes the equivalence class of $x$ in $\cR$.
\end{thm}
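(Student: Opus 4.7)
The plan is to reduce the identity to a countable sum of pushforward change-of-variables formulas, one for each piece in a Feldman--Moore decomposition of $\cR$, and then reassemble using the cocycle identity on the diagonal.

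First, I would invoke the Feldman--Moore theorem to write $\cR \setminus \Delta_X = \bigsqcup_{n \in \bN} \mathrm{graph}(\phi_n)$, where each $\phi_n : A_n \to B_n$ is a Borel isomorphism between Borel subsets of $X$. For any $f : \cR \to [0,\infty]$, this lets me expand
\begin{equation*}
\sum_{z \in [x]_\cR} f(x,z) \;=\; f(x,x) \;+\; \sum_{n} \ind_{A_n}(x)\, f\bigl(x,\phi_n(x)\bigr),
\end{equation*}
together with the analogous expansion $\sum_{z \in [y]_\cR} f(z,y)\,\w_\mu^y(z) = f(y,y)\,\w_\mu^y(y) + \sum_n \ind_{B_n}(y)\, f(\phi_n^{-1}(y), y)\,\w_\mu^y(\phi_n^{-1}(y))$ on the RHS. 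Since $\w_\mu^y(y)=1$ by the cocycle identity at $z=y$, both diagonal contributions in \eqref{eq:BorelMTP} reduce to $\int f(x,x)\,d\mu(x)$, and it suffices to match the off-diagonal terms summand by summand in $n$.

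Second, I would use the defining property of the Radon--Nikodym cocycle: by the mcp hypothesis, $(\phi_n)_*(\mu|_{A_n}) \ll \mu|_{B_n}$, and the cocycle is characterized (up to a $\mu$-null set) by $\w_\mu^y\bigl(\phi_n^{-1}(y)\bigr) = \tfrac{d(\phi_n)_*\mu}{d\mu}(y)$ for $y \in B_n$. Substituting this into the standard pushforward change-of-variables formula applied to the nonnegative function $y \mapsto f(\phi_n^{-1}(y), y)$ yields
\begin{equation*}
\int_{A_n} f\bigl(x,\phi_n(x)\bigr)\, d\mu(x) \;=\; \int_{B_n} f\bigl(\phi_n^{-1}(y),\, y\bigr)\,\w_\mu^y\bigl(\phi_n^{-1}(y)\bigr)\, d\mu(y).
\end{equation*}
Summing over $n$ and interchanging sum and integral via Tonelli (using $f \ge 0$) on each side finishes \eqref{eq:BorelMTP}.

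The one nontrivial point---really the heart of the theory---is establishing that the Radon--Nikodym cocycle is well-defined as a Borel function on $\cR$, independent of which partial isomorphism's graph contains a given pair $(x,y)$. This consistency rests on the mcp hypothesis: if two Borel partial isomorphisms $\phi_1$ and $\phi_2$ both send $x$ to $y$, restricting to the Borel set $\{u : \phi_1(u) = \phi_2(u)\}$ gives equal pushforward measures on the common image, so uniqueness of Radon--Nikodym derivatives forces $\tfrac{d(\phi_1)_*\mu}{d\mu} = \tfrac{d(\phi_2)_*\mu}{d\mu}$ $\mu$-a.e.\ there. A countable patching argument over a Feldman--Moore decomposition then produces a single Borel cocycle $\w_\mu : \cR \to (0,\infty)$, after which the computation above goes through unambiguously.
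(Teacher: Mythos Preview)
Your argument is correct and is essentially the standard proof of this identity (see, e.g., \cite[Section~8]{KMtopics}). Note that the paper itself does not supply a proof of this theorem: it is stated in the preliminaries section as a known result from the literature, with a reference to Kechris--Miller for the construction of the Radon--Nikodym cocycle. So there is no ``paper's own proof'' to compare against; your Feldman--Moore decomposition plus change-of-variables argument is exactly the expected justification, and your final paragraph correctly isolates the only subtle point (well-definedness of $\w_\mu$ as a cocycle on $\cR$ rather than merely a collection of derivatives attached to a particular choice of generating partial isomorphisms).
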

Similarly to the quasi-transitive graphs, where unimodularity is characterized by the relative weight function $\w\equiv 1$ on each orbit, here the Radon--Nikodym cocycle $\w_\mu\equiv 1$ is exactly when $\cR$ is pmp.

As with groups and graphs, the concept of amenability plays a central role in the study of CBERs. For convenience, we work with the notion of $\mu$-hyperfiniteness, which, by the Connes–Feldman–Weiss theorem \cite{CFW}, is equivalent to $\mu$-\textbf{amenability}. A CBER is called $\mu$-\textbf{hyperfinite} if it is a countable increasing union of Borel equivalence relations with finite classes $\mu$-a.e.
A CBER $\cR$ is called \textbf{smooth} if it admits a Borel transversal (that is a Borel set $S\subseteq X$ that intersects every $\cR$-class in exactly one point).
\textit{A Borel graph $\cG$ on a standard probability space $(X,\mu)$ is called pmp/mcp/$\mu$-amenable/$\mu$-hyperfinite/smooth if its connectedness equivalence relation is such.} 

There are many different ways to derive $\mu$-amenability of a Borel graph. The most common approach, applicable only in the pmp setting, is via the theory of cost, introduced by Levitt \cite{Levitt:cost} and developed by Gaboriau \cite{Gaboriau:mercuriale}. Cost is the analogue of the free rank for groups in the context of CBERs, and is defined as follows: \textit{for a pmp CBER $\cR$, its cost is the infimum of half the expected degree of its graphings}. As highlighted in the introduction, this approach relies on the idea that the average degree captures the global geometry of the graph. Unfortunately, such techniques do not appear to extend to the mcp or nonunimodular settings, as the interplay between the weight function and the geometry of the graph makes amenability a genuinely global property. For more on this direction, see \cite{poulin-anticost} for a negative result, and \cite[Theorem 4.4]{BLPS99inv}, \cite[Theorems 2.16 and 3.6]{wamen} for notions of weighted degree that extend some aspects of the unimodular results. However, they do not appear useful for generalizing \cite[Theorem 7.2]{BLPS99inv}.

Thus, a natural approach to mcp theory is to study the geometry of mcp graphs, similar to how the pmp theory developed prior to the theory of cost, while now accounting for the behavior of relative weights. In \cite{AnushRobin}, the authors generalized a classical result of Adams \cite{Adams:trees_amenability}:

\begin{thm}[{\cite{AnushRobin}}]\label{thm: Anush-Robin}
Let $\cG$ be an acyclic mcp graph on a standard probability space $(X,\mu)$ and $\w_\mu$ be the Radon--Nikodym cocycle of its connectedness relation with respect to $\mu$. Then $\cG$ is $\mu$-amenable if and only if a.e.\ $\cG$-component has $\le 2$ $\w_\mu$-nonvanishing ends.
\end{thm}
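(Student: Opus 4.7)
The plan is to adapt Adams' classical theorem---that a pmp acyclic Borel graph is $\mu$-amenable iff a.e.\ component has at most two ends---to the mcp setting, by replacing ordinary ends with $\w_\mu$-nonvanishing ends and invoking the Radon--Nikodym-tilted mass transport \eqref{eq:BorelMTP} in place of its pmp counterpart. The heuristic guiding the generalization is that $\w_\mu$-vanishing ends are "measure-theoretically negligible'' and therefore should not obstruct amenability; the challenge is to make this precise both qualitatively (via the end-structure) and quantitatively (via a weighted expansion inequality).

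For the $(\Leftarrow)$ direction, I would Borel-partition $X$ according to whether the $\cG$-component of $x$ has $0$, $1$, or $2$ $\w_\mu$-nonvanishing ends, and in each piece construct an increasing sequence of Borel sub-equivalence relations with finite classes that exhausts the connectedness relation. The crucial observation is that in a component with $0$ $\w_\mu$-nonvanishing ends, every weight sublevel $\{v:\w_\mu^{x_0}(v)\ge \varepsilon\}$ must be finite: otherwise K\"onig's lemma yields an end-convergent subsequence with weights bounded below, contradicting $\w_\mu$-vanishing of every end. This gives a Borel exhaustion by weight thresholds. In the $1$-end case, Borel-select the unique $\w_\mu$-nonvanishing end and orient all edges toward it; each branch hanging off the resulting spine has only $\w_\mu$-vanishing ends, hence is exhaustible as in the $0$-end case, and one combines this with truncating the spine at distance $n$ from a Borel-chosen basepoint. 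The $2$-end case is analogous using the unique bi-infinite $\w_\mu$-nonvanishing axis. Hyperfiniteness then follows, and hence $\mu$-amenability by Connes--Feldman--Weiss.

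For the $(\Rightarrow)$ direction, suppose toward contradiction that on a positive-measure set $A$ the $\cG$-component has at least three $\w_\mu$-nonvanishing ends. Using Borel selection, choose three pairwise end-disjoint $\w_\mu$-nonvanishing end-convergent sets in each such component, and declare $v$ a trifurcation vertex if removing $v$ separates the tree into three subtrees, each containing one of these selected sets; acyclicity ensures that trifurcation vertices themselves form a subtree of branching degree at least three. Assuming $\cG = \bigcup_n \cR_n$ is $\mu$-hyperfinite, I would design a Borel mass transport $f(x,y)$ sending each vertex $x$ to its nearest trifurcation vertex in a designated direction, and restrict attention to transports that stay within a single $\cR_n$-class. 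Applying \eqref{eq:BorelMTP} to this transport, the left-hand side is bounded by the $\mu$-measure of $A$, while the $\w_\mu$-tilted right-hand side is forced to blow up because each trifurcation vertex receives mass from three $\w_\mu$-nonvanishing directions, yielding the desired contradiction.

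The main obstacle is the forward direction: in Adams' pmp proof, three ends immediately produce a vertex-counting expansion on the trifurcation subtree, but in the mcp setting one must carry out a weighted expansion argument in which the three selected ends contribute $\w_\mu$-nonvanishing mass. This forces the Borel selection of ends and the quantitative estimate in the tilted MTP to couple the graph geometry with the Radon--Nikodym cocycle, and in particular requires a careful argument that one can choose ends that are genuinely $\w_\mu$-nonvanishing (rather than merely distinct) in a Borel fashion. A secondary challenge, in the backward direction, is verifying that every branch hanging off the spine (or axis) inherits only $\w_\mu$-vanishing ends of its own, so that the weight-threshold exhaustion really does partition each branch into finite pieces.
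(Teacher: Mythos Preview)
The paper does not prove this theorem; it is stated as \cref{thm: Anush-Robin} with a citation to \cite{AnushRobin} and invoked as a black box (in the discussion preceding \cref{thm:nonamenable} and implicitly in \cref{prop:hf_properties}\eqref{prop:hf_properties_acyclic}). There is therefore no proof in the present paper against which to compare your proposal.

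As a standalone sketch, your outline follows the expected strategy of adapting Adams' argument, and your identification of where the difficulties lie is accurate. One concrete gap to flag in the $(\Leftarrow)$ direction: your exhaustion of the $0$-nonvanishing-end case by sublevel sets $\{v : \w_\mu^{x_0}(v) \ge \varepsilon\}$ presupposes a Borel choice of basepoint $x_0$ in each component, which is precisely a transversal---so as written this step assumes smoothness rather than deriving it. The repair (at least for locally finite $\cG$) is to note that when every end is $\w_\mu$-vanishing, each component has a finite nonempty set of vertices of maximal relative weight, and \emph{that} finite set is Borel-selectable, yielding smoothness. In the $(\Rightarrow)$ direction your sketch remains at the heuristic level: the phrase ``forced to blow up because each trifurcation vertex receives mass from three $\w_\mu$-nonvanishing directions'' is not yet a quantitative contradiction with a hyperfinite exhaustion, and making it one---coupling the Borel selection of nonvanishing ends to a tilted-MTP estimate that actually diverges---is where the real content of \cite{AnushRobin} lies.
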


This approach proved particularly successful, as it enabled the development of the entire mcp framework and its applications to percolation theory on nonunimodular transitive graphs \cite{CTT22, wamen}, as well as the resolution of a long-standing question in the measure equivalence classification of Baumslag--Solitar groups \cite{MEBaumslagSolitar}.

The main result of \cite{CTT22} generalizes the Gaboriau--Ghys theorem \cite{Ghys:Stallings} and \cite[IV.24]{Gaboriau:cout} to the setting of locally finite mcp graphs. It states that if each component of such a graph $\cG$ has at least three nonvanishing ends, \cite{CTT22} constructs a nonamenable subforest, thereby providing a concrete witness to the $\mu$-nowhere amenability of $\cG$. The authors also show in \cite[Proposition 3.24]{CTT22} that one can apply \cref{thm: Anush-Robin} directly to deduce the $\mu$-nonamenability of $\cG$ without constructing a subforest. Combining the latter result with \cite[Proposition 5.3]{Chen-Kechris} (as spelled out in a forthcoming work \cite{CTTTD}) establishes the following version relativized to subrelations, on which our proof of Theorem~\ref{thm:heavyrepulsion} crucially relies.

\begin{thm}[{\cite{CTTTD}}]\label{thm:nonamenable}
    Let $\cS \subseteq \cR$ be mcp CBERs on a standard probability space $(X,\mu)$ and $\w : \cR \to \bR^+$ be a Borel cocycle. 
    Let $\cG$ be a locally finite graphing of $\cR$ and let $Y$ be the ($\cS$-invariant) set of all $x \in X$ such that $\cG$ has $\ge 3$ $\w$-nonvanishing ends along $[x]_\cS$.
    Then the relation $\cS |_Y$ is $\mu$-nowhere hyperfinite.
\end{thm}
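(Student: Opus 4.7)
The plan is to derive this relativized statement from its non-relativized progenitor \cite[Proposition 3.24]{CTT22}, which asserts that a locally finite mcp graphing whose connectedness relation has $\ge 3$ $\w$-nonvanishing ends on $\mu$-a.e.\ class is itself $\mu$-nowhere hyperfinite. The passage from a result about the entire relation $\cR$ to one about the subrelation $\cS$ is then carried out using \cite[Proposition 5.3]{Chen-Kechris}, which supplies the descriptive set theoretic machinery for transferring sub-object constructions between nested CBERs in a Borel, invariant fashion.

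The argument would proceed by contradiction. Suppose there is a Borel set $A \subseteq Y$ with $\mu(A)>0$ on which $\cS|_A$ is $\mu$-hyperfinite; by $\cS$-saturating we may assume $A$ is $\cS$-invariant. The heart of the proof is then to construct a locally finite mcp graphing $\cG'$ of $\cS|_A$, with $\cG' \subseteq \cS|_A$, such that for $\mu$-a.e.\ $x \in A$ the graph $\cG'$ has $\ge 3$ $\w$-nonvanishing ends on $[x]_{\cS|_A}$. The idea is to exploit the hyperfinite exhaustion of each $\cS|_A$-class by an increasing sequence of finite sets to stitch together $\cG$-paths whose endpoints lie in the same $\cS|_A$-class, routing them so that the $\ge 3$ $\w$-nonvanishing asymptotic directions of $\cG$ along $[x]_\cS$ survive as genuine $\w$-nonvanishing ends of $\cG'$. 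Producing this rerouting in a Borel, jointly invariant, and weight-respecting manner is precisely the role of \cite[Proposition 5.3]{Chen-Kechris}, as spelled out in \cite{CTTTD}.

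Once such a $\cG'$ is in hand, applying \cite[Proposition 3.24]{CTT22} to it on $(A, \mu|_A)$ gives that $\cS|_A$ is $\mu$-nowhere hyperfinite, directly contradicting the assumption on $A$. The main obstacle is the construction of $\cG'$: the $\w$-nonvanishing ends of $\cG$ along $[x]_\cS$ need not be contained in $[x]_\cS$ and may traverse arbitrarily many other $\cS$-classes, so the rerouting must be uniform in $x$, Borel, and carried out so that neither two distinct $\w$-nonvanishing directions are collapsed into one nor a $\w$-nonvanishing end is turned into a $\w$-vanishing one. This coordination between the geometry of $\cG$, the class structure of $\cS$, and the cocycle $\w$ is the substantive content underlying the otherwise clean combination of \cite[Proposition 3.24]{CTT22} with \cite[Proposition 5.3]{Chen-Kechris}.
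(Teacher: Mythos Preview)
The paper does not actually prove \cref{thm:nonamenable}: it is quoted as a black box from the forthcoming work \cite{CTTTD}, with only the one-line indication that the argument there ``combines \cite[Proposition~3.24]{CTT22} with \cite[Proposition~5.3]{Chen-Kechris}.'' Your proposal correctly names precisely these two inputs and the intended logical shape --- reduce to the non-relativized statement \cite[Proposition~3.24]{CTT22} by producing, inside each $\cS$-class, a locally finite graphing that still sees $\ge 3$ $\w$-nonvanishing ends --- so at the level of detail the present paper supplies, your outline matches.

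One point of divergence worth flagging: your sketch frames the construction of $\cG'$ as a contradiction argument that \emph{uses} the assumed hyperfinite exhaustion of $\cS|_A$ to stitch $\cG$-paths together. The paper, by contrast, attributes the relativization step wholesale to \cite[Proposition~5.3]{Chen-Kechris}, which is a general Borel selection/structurability result and has nothing to do with hyperfiniteness. This strongly suggests that in \cite{CTTTD} the graphing of $\cS$ (or the appropriate reduction) is built \emph{directly and unconditionally}, after which \cite[Proposition~3.24]{CTT22} is applied --- rather than via a hypothetical hyperfinite structure that is later contradicted. Your description of the rerouting and the obstacles it faces is reasonable in spirit, but the specific mechanism you propose is speculative and likely not how the cited tools are actually combined. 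Since the present paper offers no further detail, nothing more precise can be checked against it.
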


We also recall a result of Kaimanovich \cite{kaimanovich1997amenability} that allows to witness $\mu$-amenability of an mcp graph via isoperimetric inequality.

\begin{defn}
    Let $\cG$ be an mcp graph on a standard probability space $(X,\mu)$ and $\w_\mu$ be the Radon--Nikodym cocycle of its connectedness relation with respect to $\mu$.
    The \textbf{local Cheeger constant} at a vertex $x \in X$ is then defined as  
    \begin{align}
    \Phi^{\w_\mu}_x(\cG)& \coloneqq  \inf\left\{ \dfrac{\w_\mu^x(\partial_{\cG} F)}{\w_\mu^x(F)}: F \subseteq [x]_{\cG}  \text{ finite and nonempty} \right\}\label{cheeg_loc},
\end{align}
where $\partial_{\cG}F$ denotes the outer-vertex boundary of $F$ in $\cG$.
\end{defn} 
Note that $\Phi^{\w_\mu}_x(\cG)$ is $\cG$-invariant, that is $\Phi^{\w_\mu}_x(\cG)=\Phi^{\w_\mu}_y(\cG)$ whenever $y\in[x]_\cG$.


\begin{thm}[{\cite{kaimanovich1997amenability}}]\label{thm:Kai}
    Let $\cG$ be an mcp graph on $(X,\mu)$ of uniformly bounded degree. Then $\cG$ is hyperfinite if and only if for any measurable $B \subseteq X$ of $\mu$-positive measure,
    $\Phi_{x}^{\w_\mu}(\cG|_B) = 0$
    for $\mu$-a.e.\ $x\in X$.
\end{thm}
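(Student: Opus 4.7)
The plan is to prove both directions by tracking the Radon--Nikodym cocycle through the MTP, connecting the weighted isoperimetric condition to $\mu$-hyperfiniteness.

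For the forward direction, suppose $\cG$ is $\mu$-hyperfinite. Since restrictions of hyperfinite CBERs remain hyperfinite, for any positive-measure $B$ the restriction $\cG|_B$ is also $\mu$-hyperfinite, so it suffices to show that every $\mu$-hyperfinite bounded-degree mcp graph $\cG'$ has $\Phi_x^{\w_\mu}(\cG') = 0$ for $\mu$-a.e.\ $x$. Writing the connectedness relation $\cR$ of $\cG'$ as $\bigcup_n \cR_n$ with $\cR_n$ having finite classes, I would set $F_n(x) \coloneqq [x]_{\cR_n}$ and aim to show that the ratio $\phi_n(x) \coloneqq \w_\mu^x(\partial_{\cG'} F_n(x))/\w_\mu^x(F_n(x))$ satisfies $\liminf_n \phi_n(x) = 0$ for $\mu$-a.e.\ $x$. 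The main computation is an application of the MTP to $\mathbf{1}_{(x,y)\in\cR_n}$, yielding $\int \w_\mu^x(F_n(x))\, d\mu = \int |F_n(y)|\, d\mu$, and to $\mathbf{1}_{y\in\partial_{\cG'}F_n(x)}$, bounding $\int \w_\mu^x(\partial_{\cG'}F_n(x))\, d\mu$ by the expected number of $\cR_n$-cross edges at $y$. Since $\cR_n \nearrow \cR$, $\mu$-a.e.\ vertex $y$ has all of its $\cG'$-neighbors inside its $\cR_n$-class for $n$ large, so by bounded degree the cross-edge count vanishes pointwise, and a Fatou-type argument extracts a subsequence witnessing the vanishing of $\phi_n$.

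For the backward direction, suppose that $\Phi_x^{\w_\mu}(\cG|_B) = 0$ for $\mu$-a.e.\ $x$ for every positive-measure $B$. I would construct a hyperfinite exhaustion $\cR_1 \subseteq \cR_2 \subseteq \cdots$ of the connectedness relation of $\cG$ iteratively: at each stage $n$, use the hypothesis applied to the currently ``uncovered'' set together with a Borel selection to partition almost all of it into disjoint finite weighted F\o{}lner sets witnessing a Cheeger ratio below $1/n$; declare each such F\o{}lner set to be a class of $\cR_n$, merged compatibly with $\cR_{n-1}$. The hypothesis being phrased relative to \emph{every} positive-measure $B$ is precisely what allows the induction to proceed on the shrinking uncovered complement, and the bounded-degree assumption ensures that any $\cG$-edge is eventually absorbed so that $\bigcup_n \cR_n$ coincides with the connectedness relation of $\cG$ modulo $\mu$-null sets.

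I expect the main obstacle to be the backward direction, specifically the Borel selection of F\o{}lner witnesses so that they assemble into a Borel equivalence subrelation while simultaneously leaving enough uncovered mass for future iterations. The crux is a Rokhlin/quasi-tower type construction for mcp CBERs that uniformly selects weighted F\o{}lner sets of controlled shape; bounded degree is essential here both to control the interfaces between proposed F\o{}lner regions and to guarantee that the finite classes exhaust the full connectedness relation in the limit.
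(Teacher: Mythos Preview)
The paper does not supply a proof of this statement: \cref{thm:Kai} is quoted from \cite{kaimanovich1997amenability} as a black box in \cref{sec:prelim_mcp} and is invoked without argument in the proof of \cref{thm:pos_Cheeger}. There is therefore no proof in the paper against which to compare your proposal.

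As a sketch of Kaimanovich's result itself, your outline is in the right spirit but the forward direction is not yet correct. Applying the MTP to $f(x,z)=\w_\mu^x(z)\,\mathbf{1}_{z\in\partial_{\cG'}F_n(x)}$ and using the cocycle identity $\w_\mu^z(y)\w_\mu^y(z)=1$ gives
\[
\int \w_\mu^x(\partial_{\cG'}F_n(x))\,d\mu(x)=\int \bigl|\{z: y\in\partial_{\cG'}F_n(z)\}\bigr|\,d\mu(y),
\]
and the right-hand integrand is bounded by $\sum_{w\sim y,\ (y,w)\notin\cR_n}|F_n(w)|$, not by the number of $\cR_n$-cross edges at $y$. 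The extra factor $|F_n(w)|$ is not controlled by the exhaustion, so your Fatou step does not go through as stated; one needs either a further averaging (exploiting that $\phi_n$ is $\cR_n$-invariant) or to pass through Reiter sequences via Connes--Feldman--Weiss. For the backward direction you have correctly located the crux: a weighted quasi-tiling construction selecting disjoint Borel families of $\w_\mu$-F\o lner sets relative to an arbitrary positive-measure $B$, with bounded degree controlling interfaces and guaranteeing exhaustion. That is indeed the substantive content of Kaimanovich's argument, and it is not a routine selection.
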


We call $\Phi_x^{\w_\mu}(\cG)$ ``local" to make a distinction with a related isoperimetric constant that appears in the adaptations of the theorem above. For example, \cite{Elek12} and \cite{CGMTDoneended} adapt and extend this characterization in terms of a ``global" Cheeger constant. However, one can easily pass from one to another via local-global bridge lemma from \cite[Lemma 3.2]{Ts:hyperfinite_ergodic_subgraph}, see \cite[Theorem 2.31]{TasminThesis}.

\subsection{The cluster graphing construction}\label{sec:cluster_graphing}
The general cluster graphing construction allows one to translate the results from measured group theory to percolation theory, and vice versa. It was first introduced in the percolation context in \cite[Sections 2.2,2.3, and 5]{Gaboriau05}, with related ideas appearing in \cite[Section 3.2]{JKLcber} and \cite[1.6.2]{Adams:trees_amenability}. For a treatment adapted to the nonunimodular setting see \cite[Section 5]{CTT22}.

Since this paper is concerned with quasi-transitive graphs we require some technical modifications to the constructions, which we summarize below. 

Let $G = (V,E)$ be a locally finite countable graph, $\Gamma \subseteq \Aut(G)$ a closed subgroup which acts quasi-transitively on $G$, $\w$ be the induced relative weight function as in \eqref{def:haarweights}, and $\mathbf{P}$ a $\Gamma$-invariant measure on $2^{V \times V}$. Although a configuration sampled from such a process is not necessarily a subgraph of $G$, the beginning of the construction is the same: we build an mcp graph $\wt{\cG}$ whose a.e.~connected components are isomorphic to $G$ and the Radon--Nikodym cocycle $\w_\mu$ is given by the relative weight function $\w$.

First, choose a complete set of representatives $\mvO\coloneqq\{o_1, \cdots, o_L\} \in V$ from the $\Gamma$-orbits of $V$.
Consider an ergodic free pmp action of $\Gamma$ on the standard probability space $(X,\mu)$ which factors onto $(2^{V \times V}, \mathbf{P})$ via a factor map $\go.$ 
For example, $(X,\mu)$ can be taken to be $\mathrm{PPP}(\Gamma)\times (2^{V \times V},\mathbf{P})$, where $\mathrm{PPP}(\Gamma)$ is the probability space of infinite countable subsets of $\Gamma$ equipped with a Poisson point process measure, which function in this construction as a symmetry breaking device.
Assign to each element of $X$ a vertex from $V$ and extend the action of $\Gamma$ to $X \times V$ and consider the quotient $Y \coloneqq  (X \times V)/ \Gamma$ under the diagonal action of $\Gamma$. Notice that $Y$ can be naturally identified with the disjoint union of standard Borel spaces $\bigsqcup_{i=1}^L X /\Gamma_{o_i}$. To define a measure on it, we first define functions $\pi_i: X \rightarrow Y$, for $1 \leq i \leq L$, by letting $\pi_i(x) = [(x,o_i)]_\Gamma$. Notice $\pi_i$ surjects onto $X / \Gamma_{o_i}$ and thus $\pi_{i_{\ast}} \mu (X / \Gamma_{o_i}) = 1$. 
Define a measure $\nu$ on $Y$ as the convex combination 
\begin{equation}\label{eq:nu_from_mu}
\nu\coloneqq \sum_{i=1}^L \frac{1}{L}\pi_{i_{\ast}} \mu.    
\end{equation}
The action of $\Gamma$ on $Y$ may not preserve $\nu$, however it is always mcp on a $\nu$-conull set e.g.\ by \cite[Proposition 2.1]{Miller:thesis}. The Radon--Nikodym cocycle of the orbit equivalence relation of this action with respect to $\nu$ can be expressed in terms of the relative weight function $\w_\Gamma$ on $G$, as given in \eqref{def:haarweights}:
\begin{equation}\label{eq:RNweights}
        \w_\mu([x,v]_\Gamma,[x,u]_\Gamma)\coloneqq\w_\mu^{[x,u]_\Gamma} ([x,v]_\Gamma)\coloneqq\w_\Gamma^{u}(v).
\end{equation}

To see that, first, as before let $\E_{\rho}$ denote the expectation with respect to the uniform choice of a root from $\mvO$. Note any Borel function $F:Y^2\to \bR^+$ can be rewritten as $\widehat{F} : V^2 \times X\to [0,\infty]$ defined as
\[
\widehat{F}(u,v;x) \coloneqq F\left([(x,u)]_\Gamma,[(x,v)]_\Gamma\right).
\]
Thus, by similar computations to those in the proof of \cite[Lemma 5.7]{CTT22} we get that 
\begin{align*}
\int_Y \sum_{\widetilde{y} \in [\widetilde{x}]} F(\widetilde{x},\widetilde{y}) d\nu(\widetilde{x}) 
&=\int_X \sum_{i=1}^L\frac{1}{L}\sum_{v \in V} F([x,o_i]_\Gamma,[x,v]_\Gamma) d\mu(x)=\E_{\rho}\sum_{v \in V} \int_X \widehat{F}(\rho,v;x) d\mu(x)\\
\textnormal{[by the TMTP]}\quad&=\E_{\rho}\sum_{v \in V} \int_X \widehat{F}(v,\rho;x) \w^o_\Gamma(v)d\mu(x)\\
&=\int_X \sum_{i=1}^L\frac{1}{L} \sum_{v \in V} F([x,v]_\Gamma,[x,o_i]_\Gamma) \w_\mu^{[x,o_i]_\Gamma}([x,v]_\Gamma)d\mu(x)\\
&=\int_Y \sum_{\widetilde{y}\in [\widetilde{x}]} F(\widetilde{y},\widetilde{x}) \w_\mu^{\widetilde{x}}(\widetilde{y})d\nu(\widetilde{x}).
\qedhere
\end{align*}

The desired mcp graph $\wt{\cG}$ is a graphing of the orbit equivalence relation of the action of $\Gamma$ on $Y$ and is defined as follows: place an edge between two elements $\wt{x}\coloneqq [(x,u)]_\Gamma$ and $\wt{y}\coloneqq [(y,v)]_\Gamma$ from $Y$ to be adjacent in $\cG$ if and only if there exists $\gamma\in \Gamma$ so that $x=\gamma y$ and $(u,\gamma v)\in E$. Note that $\cG$-components are isomorphic to $G$ as relatively weighted graphs; to make this precise, for each $x \in X$, consider the maps $\varphi_{x,i} : [\pi_i(x)]_{\wt{\cG}} \to V$ defined by $[(x,v)]_\Gamma \mapsto v$ is a bijection, which is an isomorphism of $\wt{\cG} |_{[\pi_i(x)]_{\wt{\cG}}}$ and $G$, mapping $\pi_i(x)=[(x,o_i)]_\Gamma$ to the root $o_i$.
Note that $\varphi_x$ truly depends on $x$ and not just on $\pi_i(x)$, in the sense that if $\gamma \in \Gamma_x$ is a nonidentity element, then $\pi_i(x) = \pi_i(\gamma x)$ but $\varphi_x$ is different from $\varphi_{\gamma x}$. In fact, these maps are $\Gamma$-equivariant in $x$, i.e.\ for each $i\le L$, $x\in X$ and $\wt{y}=[x,v]_\Gamma\in Y$ we have 
\[
\varphi_{\gamma x,i}(\wt y) = \varphi_{\gamma x,i}([(\gamma x, \gamma v)]_\Gamma) = \gamma v = \gamma \varphi_{x,i}(\wt y).
\]

Recall the map $\go:X\to 2^E$ through which $(X,\mu)$ factors $(2^E,\mathbf{P})$. We now define the induced \textbf{cluster graphing} $\cG^\mathrm{cl}\subseteq \wt{\cG}$ by placing edges $([(x,u)]_\Gamma, [(x,v)]_\Gamma)$ for which $(u,v) \in \go(x)$. In particular, for each $x \in X$ and $i\le L$, the restriction of the map $\varphi_{x,i}$ to the $\cG^{\mathrm{cl}}_\omega$-component $C_x$ of $\pi(x)$ is an isomorphism of the (relatively) weighted graph $(\cG^{\mathrm{cl}}_\omega |_{C_x},\w_\mu)$ and the cluster of the root $o_i$ in $\omega(x)$ equipped with $\w_\Gamma$.

Taking different $(X,\mu)$ at the beginning of the construction results in different cluster graphings. Although all of them admit the family of component-wise isomorphisms $\{\varphi_{x,i}\}_{x\in X, i\le L}$ to the clusters of the corresponding roots in percolation configurations, equipping $(X,\mu)$ with additional structure is often useful in applications. For some index set $I$ and a coupled family of processes $\{\mathbf{P}_i\}I\in I$ on $2^{V\times V}$ that is jointly $\Gamma$-invariant we consider their \textbf{joint cluster graphing} constructed similarly to the above starting from a standard probability space $(X,\mu)$ that factors onto $(2^{V\times V},\mathbf{P}_i)$ for each $i\in I$. Similarly, we say that a cluster graphing \textbf{factors} onto $\mathbf{P}$ if it is constructed as above starting with $(X,\mu)$ that factors on $(2^{V\times V},\mathbf{P})$.

An invariant random partition naturally gives rise to a CBER on $(Y, \nu)$. We first represent the partition as a $\Gamma$-invariant random graph on $V$ by placing a complete graph on each partition class. 
Then the induced CBER is the connectedness relation of the corresponding cluster graphing. For the purposes of this paper, the invariant random partition of interest is always coupled with the Bernoulli$(p)$ percolation process. Considering their joint cluster graphing yields a pair $(\cG^{\mathrm{cl}}, \cR_{\htau})$, where $\cG^{\mathrm{cl}}$ is the cluster graphing that factors onto $(\mathbb{P}_p,\wh{\mathbf{P}})$, the joint law of $(\go,\htau_\go)$, and $\cR_{\htau}$ is the subequivalence relation of the connectedness relation of $\cG^{\mathrm{cl}}$ induced by the invariant random partition into touching classes.

%
\section{Subgraphs with positive weighted Cheeger constant}
%
Analogously to the measured group theory hyperfiniteness plays an important role in percolation theory on transitive graphs. It was implicitly introduced in \cite[Theorem 5.1]{BLPS99inv} and used in a similar fashion in a plethora of subsequent results. It was then extended in \cite{URG} in greater generality for invariant random graphs, but only in application to the unimodular setting. In \cite[Definition 4.1]{wamen} the authors state a definition of a hyperfinite subgraph of $G$, here we relax the restriction of being a subgraph of the underlying graph $G$.

\begin{defn}[Hyperfiniteness]\label{def:hf}
    Let $\Gamma$ be a closed subgroup of $\Aut(G)$ that acts quasi-transitively on $G$ and $\w$ be the induced relative weight function as in \eqref{def:haarweights}. Let $\mathbf{P}$ be a $\Gamma$-invariant measure on $2^{V\times V}$. We say that $\mathbf{P}$ is \textbf{hyperfinite} if for $\mathbf{P}$-a.e.\ configuration $\omega$ there is an increasing family random subgraphs $\go_1\subseteq\go_2\subseteq\ldots\subseteq\go_n\subseteq\ldots\subseteq\go$ (possibly on a larger probability space) such that 
\begin{enumerate}
    \item the joint law of $(\go,\{\go_n\}_{n\in\bN})$ is $\Gamma$-invariant,
    \item\label{def:hyperfinite1} $\cup\,\omega_n=\go$,
    \item\label{def:hyperfinite2} for all $n\ge1$ all components are finite $\omega_n$-a.s.\ 
\end{enumerate}
\end{defn}

Note that this definition applies to both $\Gamma$-invariant random graphs on $V$ and invariant random partitions on $G$ as both of these objects are $\Gamma$-invariant measures on $2^{V\times V}$.

\begin{rem}
    It is often more convenient to talk in the setting of random graphs, and so we say that a $\Gamma$-invariant random graph $\go$ on $V$ (i.e.\ a configuration of the process $\mathbf{P}$) is hyperfinite if its law is. Note that here the structure of the underlying graph $G$ is remembered through the action of $\Gamma$ and the induced relative weight function $\w_\Gamma$. Finally, sometimes one would like to show a stronger property and claim that every heavy component of $\go$ is not hyperfinite. However, a random graph that corresponds to a given component is not well defined. In this case, one relies on the $\Gamma$-invariance of the process and interprets this statement as ``the probability that the cluster of the origin is not hyperfinite, conditioned on it being heavy,'' is equal to $1$, e.g.\ see \cite[Theorem 6.6]{wamen}.
\end{rem}

Although \cref{def:hf} is more general \cite[Definition 4.1]{wamen} much of the analysis from it carries over. In particular, some of the proofs translate verbatim as they primarily rely on the cluster graphing construction and measure group theoretic results from \cite{CTT22,wamen}. Similarly, the classical properties of $\mu$-hyperfinte Borel graphs also translate easily. We would like to highlight that many other results also extend easily, although for some of them one needs to impose an additional assumption of the \textit{local finiteness} on $\go$ (for example those that rely on the Free Maximal Spanning Forest from \cite{CTT22}).
We summarize the results that are relevant to the present work here without proofs citing the sources where the corresponding statements are \textit{essentially} proven.

\begin{prop}\label{prop:hf_properties}
    Let $\Gamma$ be a closed subgroup of $\Aut(G)$ that acts quasi-transitively on $G$ and $\w$ be the induced relative weight function as in \eqref{def:haarweights}. Let $\mathbf{P}$ be a $\Gamma$-invariant measure on $2^{V\times V}$. Then
    \begin{enumerate}
        \item\label{prop:hf_properties_cl} $\mathbf{P}$ is hyperfinite if and only if $\mathbf{P}$ admits a cluster graphing on a standard probability space $(Y,\nu)$ that is $\nu$-hyperfinite. In fact, every cluster graphing that factors onto the hyperfinite exhaustion $(\mathbf{P},\{\mathbf{P}_n\}_{n\in\bN})$ must be $\nu$-hyperfinite. See \cite[Lemma 4.3]{wamen}.
        \item\label{prop:hf_properties_incr_union} Increasing union of configurations of jointly invariant hyperfinite processes is hyperfinite, see \cite[Proposition 4.2]{Feldman-Moore}.
        \item\label{prop:hf_properties_acyclic} Assume that $\mathbf{P}$-a.e.\ $\omega$ is acyclic. Then $\mathbf{P}$ is not hyperfinite if and only if $\go$ contains a component with $\ge 3$ $\w$-nonvanishing ends, see \cite[Theorem 5.3]{wamen} and \cite{AnushRobin}.
    \end{enumerate}
\end{prop}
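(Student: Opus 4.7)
My plan is to deduce all three parts from the cluster graphing construction of Section 2.4, which faithfully translates statements about $\Gamma$-invariant processes on $2^{V\times V}$ into statements about mcp Borel graphs on a standard probability space $(Y,\nu)$. The main subtlety, and why these statements require proof rather than a plain citation of \cite{wamen,CTT22,AnushRobin}, is that \cref{def:hf} allows $\go$ to be an arbitrary $\Gamma$-invariant random graph on $V$, not only a subgraph of $G$, so I need to check that the cluster graphing correspondence remains robust under this extension. The key technical point will be the $\Gamma$-equivariance in $x$ of the component-wise isomorphisms $\{\varphi_{x,i}\}$, which is what enables the two-way passage between jointly invariant random subgraphs of $\go$ and Borel subequivalence relations of the connectedness relation of the cluster graphing.

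For part (\ref{prop:hf_properties_cl}), in the forward direction I would take a jointly $\Gamma$-invariant hyperfinite exhaustion $(\go,\{\go_n\}_{n\in\bN})$ of $\mathbf{P}$ and form a joint cluster graphing factoring onto the joint law. Each $\go_n$ then induces a subgraphing $\cG^{\mathrm{cl}}_n\subseteq\cG^{\mathrm{cl}}$ whose components, via $\varphi_{x,i}$, correspond to connected components of $\go_n$ and are therefore finite $\nu$-a.e.; the resulting connectedness relations exhaust that of $\cG^{\mathrm{cl}}$, witnessing its $\nu$-hyperfiniteness. Conversely, from a $\nu$-hyperfinite exhaustion $\{\cE_n\}$ of the connectedness relation of $\cG^{\mathrm{cl}}$, I would pull back $\cE_n$-classes along the $\varphi_{x,i}$'s to produce the desired $\Gamma$-invariant sequence $\{\go_n\}$. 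For part (\ref{prop:hf_properties_incr_union}), given a jointly $\Gamma$-invariant increasing sequence $\go^1\subseteq\go^2\subseteq\cdots$ of hyperfinite processes, I would set up a single joint cluster graphing, apply part (\ref{prop:hf_properties_cl}) to conclude that each $\cR_{\cG^{\mathrm{cl}}_i}$ is $\nu$-hyperfinite, and invoke the classical fact that an increasing union of $\nu$-hyperfinite CBERs is $\nu$-hyperfinite (a diagonal argument going back to Feldman--Moore) to conclude $\nu$-hyperfiniteness of $\bigcup_i\cR_{\cG^{\mathrm{cl}}_i}$; transferring back via (\ref{prop:hf_properties_cl}) yields the claim.

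For part (\ref{prop:hf_properties_acyclic}), since $\go$ is a.s.\ acyclic, its cluster graphing $\cG^{\mathrm{cl}}$ is acyclic, and the $\varphi_{x,i}$'s carry $\w$-nonvanishing ends of components of $\go$ to $\w_\mu$-nonvanishing ends of components of $\cG^{\mathrm{cl}}$ via the correspondence $\w_\mu \leftrightarrow \w$ from \eqref{eq:RNweights}. Applying \cref{thm: Anush-Robin} to $\cG^{\mathrm{cl}}$, together with Connes--Feldman--Weiss identifying $\nu$-amenability with $\nu$-hyperfiniteness, shows that $\cG^{\mathrm{cl}}$ is $\nu$-hyperfinite if and only if $\nu$-a.e.\ component has at most two $\w_\mu$-nonvanishing ends; pushing this equivalence through part (\ref{prop:hf_properties_cl}) then completes the proof.
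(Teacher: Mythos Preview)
Your proposal is correct and follows precisely the route the paper gestures at: the paper gives no proof of this proposition, only citations, and your argument is the natural way to cash those citations in through the cluster graphing correspondence of Section~\ref{sec:cluster_graphing}. The passages in both directions of part~(\ref{prop:hf_properties_cl}) via the $\Gamma$-equivariant family $\{\varphi_{x}\}$ are exactly what is done (implicitly) in the proof of \cref{thm:pos_Cheeger}, and your reduction of part~(\ref{prop:hf_properties_acyclic}) to \cref{thm: Anush-Robin} is the intended one.

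One step in part~(\ref{prop:hf_properties_incr_union}) deserves more care. To conclude that each $\cR_{\cG^{\mathrm{cl}}_i}$ is $\nu$-hyperfinite in your \emph{single} joint cluster graphing, you must invoke the second sentence of part~(\ref{prop:hf_properties_cl}), which requires that this graphing factor not only onto the joint law of $(\omega^i)_i$ but also onto each hyperfinite exhaustion $(\omega^i,\{\omega^i_n\}_n)$. A priori these exhaustions live on separate enlarged probability spaces, each jointly $\Gamma$-invariant only with its own $\omega^i$, not with the other $\omega^j$'s. You therefore need to couple all of them $\Gamma$-invariantly over the given joint law of $(\omega^i)_i$. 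This is achieved by a relatively independent joining: disintegrate each exhaustion law over its marginal $\mathbf{P}^i$ to obtain a $\Gamma$-equivariant kernel $K^i(\omega^i,\cdot)$, and take the product of these kernels over the joint law of $(\omega^i)_i$. Once $(X,\mu)$ is chosen to factor onto this enlarged joint law, the second sentence of part~(\ref{prop:hf_properties_cl}) applies to each $i$, and your appeal to the measured increasing-union theorem goes through.
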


\begin{thm}\label{thm:pos_Cheeger}
    Let $G$ be a connected locally finite quasi-transitive graph and let $\w$ be the relative weight function induced by a closed subgroup $\Gamma\subseteq\Aut(G)$ as in \eqref{def:haarweights}.  Let $\mathbf{P}$ be a $\Gamma$-invariant measure on $2^{V\times V}$ whose configurations $\go$ have uniformly bounded degrees a.s. 
    Then $\mathbf{P}$ is hyperfinite if and only if for any random nonempty subset $A\subseteq V(G)$, whose law $\mathbf{P}'$ is jointly $\Gamma$-invariant with $\mathbf{P}$, the induced subgraph $\go'=\go|_A$ has $\Phi_V^{\w}(\omega')=0$ a.s.
\end{thm}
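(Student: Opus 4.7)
The plan is to transport the problem to the cluster graphing of $\mathbf{P}$ on a standard probability space $(Y, \nu)$ constructed as in Section~\ref{sec:cluster_graphing}, and there to invoke Kaimanovich's Theorem~\ref{thm:Kai}. By Proposition~\ref{prop:hf_properties}(1), $\mathbf{P}$ is hyperfinite if and only if its cluster graphing $\cG^{\mathrm{cl}}$ is $\nu$-hyperfinite, and the uniform degree bound on $\mathbf{P}$-configurations passes directly to $\cG^{\mathrm{cl}}$, so Theorem~\ref{thm:Kai} is applicable.

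The main bridging ingredient is a correspondence between jointly $\Gamma$-invariant random subsets $A \subseteq V(G)$ (coupled with $\mathbf{P}$) and measurable subsets $B \subseteq Y$. Given $A$, set $B \coloneqq \set{[(x,v)]_\Gamma : v \in A(x)}$; well-definedness follows from joint $\Gamma$-invariance. Conversely, given $B$, lift the restrictions of $\ind_B$ to each piece $X/\Gamma_{o_i}$ of $Y$ to $\Gamma_{o_i}$-invariant Borel indicators $\tilde f_i : X \to \set{0,1}$, and define $A(x) \coloneqq \bigsqcup_{i=1}^L \set{\gamma o_i : \tilde f_i(\gamma^{-1}x) = 1}$; a direct check shows $A(\gamma x) = \gamma A(x)$, so the pair $(\omega, A)$ is jointly $\Gamma$-invariant. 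By ergodicity of the base $\Gamma$-action on $(X,\mu)$, $A$ is a.s.\ nonempty if and only if $\nu(B) > 0$. Moreover, via the component-wise isomorphisms $\varphi_{x,i}$ together with the weight matching \eqref{eq:RNweights}, the local Cheeger constant $\Phi^{\w_\mu}_{\pi_i(x)}(\cG^{\mathrm{cl}}|_B)$ equals the weighted Cheeger constant of the component of $o_i$ in the induced subgraph $\omega(x)|_{A(x)}$.

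For the $(\Rightarrow)$ direction, assume $\mathbf{P}$ is hyperfinite. Given any jointly $\Gamma$-invariant a.s.\ nonempty $A$, its associated $B$ has $\nu(B) > 0$, so Theorem~\ref{thm:Kai} yields $\Phi^{\w_\mu}_{\tilde y}(\cG^{\mathrm{cl}}|_B) = 0$ for $\nu$-a.e.\ $\tilde y$; translating back, a.s.\ some component of $\omega|_A$ has weighted Cheeger $0$, forcing $\Phi_V^\w(\omega|_A) = 0$ a.s. For the $(\Leftarrow)$ direction we argue contrapositively. If $\cG^{\mathrm{cl}}$ is not $\nu$-hyperfinite, Theorem~\ref{thm:Kai} supplies a $B \subseteq Y$ of positive $\nu$-measure on which $\Phi^{\w_\mu}_{\tilde y}(\cG^{\mathrm{cl}}|_B) > 0$ on a positive-measure subset. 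The main obstacle is that $\Phi_V^\w(\omega|_A)$ is a \emph{global} infimum over all finite subsets and thus cannot inherit a merely per-component positive Cheeger bound; we bypass this by refining $B$ to secure a uniform lower bound. Since $\tilde y \mapsto \Phi^{\w_\mu}_{\tilde y}(\cG^{\mathrm{cl}}|_B)$ is $\cG^{\mathrm{cl}}|_B$-invariant, the sets $B_n \coloneqq \set{\tilde y \in B : \Phi^{\w_\mu}_{\tilde y}(\cG^{\mathrm{cl}}|_B) > 1/n}$ are $\cG^{\mathrm{cl}}|_B$-invariant, so each component of $\cG^{\mathrm{cl}}|_{B_n}$ has Cheeger $> 1/n$; by countable additivity, some $B_n$ has positive measure. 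Letting $A$ be the jointly $\Gamma$-invariant random subset associated to this $B_n$, ergodicity makes $A$ a.s.\ nonempty, and every component of $\omega|_A$ has weighted Cheeger $> 1/n$, so $\Phi_V^\w(\omega|_A) \ge 1/n > 0$ a.s., contradicting the hypothesis.
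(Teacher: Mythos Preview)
Your proof is correct and follows the same strategy as the paper: pass to the cluster graphing and invoke Kaimanovich's \cref{thm:Kai}, using the $B \leftrightarrow A$ correspondence to translate between measurable subsets of $Y$ and jointly invariant random subsets of $V$. The one substantive difference is in the $\Leftarrow$ direction: the paper argues directly (given arbitrary $B$, build $A$, apply the hypothesis, transfer back), whereas you argue by contrapositive and insert the refinement $B \rightsquigarrow B_n$ to upgrade ``positive local Cheeger on a positive-measure set'' to a \emph{uniform} lower bound on every component. This extra step is what lets you control the global infimum $\Phi_V^{\w}(\omega|_A)$ rather than just a single component---a point the paper's written proof does not spell out. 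One minor omission on your side: in the $\Rightarrow$ direction, to know that \emph{your} cluster graphing (the one also factoring onto $A$) is $\nu$-hyperfinite, you should take it to factor onto the hyperfinite exhaustion as well, per the second clause of \cref{prop:hf_properties}\eqref{prop:hf_properties_cl}; the paper does this explicitly.
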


\begin{rem}[Bounded degree assumption]
In \cref{thm:pos_Cheeger} we assume that $\mathbf{P}$-a.s.\ has bounded degree. We do so for sheer convenience to match the assumptions of \cite{kaimanovich1997amenability}. The result holds more generally for any locally finite process. However, the proof would require an additional step as the analogous result for locally finite Borel graphs \cite[Theorem 1.1]{CGMTDoneended} is stated in terms of the ``global" Cheeger constant. Hence one needs show the equivalence of the local and global Cheeger constants, which is done in \cite[Theorem 2.31]{TasminThesis}. 
\end{rem}

\begin{proof}[Proof of \cref{thm:pos_Cheeger}]

To simplify the exposition, we assume that $\Gamma$ acts transitively on $V$ because the quasi-transitive case is treated analogously.

$\Leftarrow:$ Recalling the notations from \cref{sec:cluster_graphing}. Let $\clustG_\omega$ be a cluster graphing of $\mathbf{P}$ on a standard probability space $(Y,\nu)$.
By \cref{prop:hf_properties}, it is enough to show that $\clustG$ is $\nu$-hyperfinite, for which it is enough to fix an arbitrary measurable subset $B \subseteq Y$ of $\nu$-positive measure and show that $\Phi^{\w_\nu}_{\wt{x}} (\clustG |_B) = 0$, by \cref{thm:Kai}.

To this end, we will push-forward $B$ into a random subset $A \subseteq V$ whose law is jointly $\Gamma$-invariant with $\mathbf{P}$, which will yield $\Phi^{\w_\nu}_{\wt{x}} (\clustG |_B) = 0$, by the hypothesis.
More precisely, let $\sigma_B : X \to 2^V$ be defined by $x \mapsto \varphi_x(B \cap [\pi(x)]_{\wt\cG})$.
Note that $\sigma_B$ is $\Gamma$-equivariant because $\varphi$ is $\Gamma$-equivariant on its $X$-coordinate and $\pi$ is reduction of the orbit equivalence relation $\cR_\Gamma$ of $\Gamma$ on $X$ to $\cR_{\wt \cG}$.
Thus, the pushforward of the measure $\mu$ via factor maps $(\omega, \sigma_B)$ is a jointly $\Gamma$-invariant random process $(\omega, A)$ on $2^{V \times V} \times 2^V$.
By the hypothesis, $\Phi^\w_V (\omega |_A) = 0$ a.s. 
But for each $x \in X$, the map $\varphi_x$ is an isomorphism of (relatively) weighted graphs $\clustG |_{B \cap [\pi(x)]_{\wt \cG}}$ and $\omega(x) |_{\sigma_B(x)}$, so $\Phi^{\w_\nu}_{\pi(x)} (\clustG |_B) = 0$ for a.e.

$\Rightarrow:$ For $\mathbf{P}$ and $\mathbf{P}'$ as in the statement, let $\{\mathbf{P}_n\}_{n\in\bN}$ be a hyperfinite exhaustion of $\mathbf{P}$ and $\mathcal{G}^{\mathrm{cl}}$ be a cluster graphing on $(Y, \nu)$ that factors onto $(\mathbf{P},\{\mathbf{P}_n\}_{n\in\bN}, \mathbf{P}')$, with factor map given by $x \mapsto (\omega(x), (\omega_n(x))_{n \in \N}, A(x))$. 
Define
\[
B_X := \{x \in X : o \in A(x)\}
\]
and let $B := \pi(B_X)$. 
Since $B_X$ and $\pi$ are Borel, $B$ is analytic and hence measurable.
Observe that for each $x \in X$, we have $\sigma_B(x) = A(x)$, so $\Phi^{\w_\nu}_{\pi(x)} (\clustG |_B) = 0$ for a.e.\ because the map $\varphi_x$ is an isomorphism of (relatively) weighted graphs $\clustG |_{B \cap [\pi(x)]_{\wt \cG}}$ and $\omega(x) |_{A(x)}$, and $\Phi^\w_V (\omega(x) |_{A(x)}) = 0$ a.e.\ by the hypothesis.

\end{proof}

A corollary of \cref{thm:pos_Cheeger} is a weighted generalization of \cite[Theorem 1.1]{BLS-perturb} as in \cref{thm:pos_Cheeger_Bernoulli}.
\begin{proof}[Proof of \cref{thm:pos_Cheeger_Bernoulli}]
    By \cite[Theorem 6.6]{wamen} percolation process on a $\w$-nonamenable graph $G$ that contains a heavy cluster is not hyperfinite. The conclusion now follows from \cref{thm:pos_Cheeger}. 
\end{proof}

\begin{defn}
    Let $K$ be a connected locally finite graph that is equipped with a weight function $\w: V(K)\to\bR^+$. Let $\ell: E(K)\to \bN$ be a labeling of edges of $K$ and $d_\ell$ be the induced metric. We say that $K$ has \textbf{exponential growth of weights} with respect to $d_\ell$ if 
    \begin{equation}\label{eq:spheres_growth}
        \liminf_{n\to\infty} \frac{1}{n}\log \w\left(S_K(o,n)\right)>0,
    \end{equation}
    where $S_K(o,n)$ denotes a sphere of radius $n$ with respect to the metric $d_\ell$ around some vertex $o$ in $K$. If the metric is not mentioned then we mean in the natural graph metric of $K$.
    Similarly, we say that a subset of vertices $W\subseteq K$ exhibits \textbf{exponential growth of weights} in $K$ if the same holds for $S_K(o,n)\cap W$. 
\end{defn}

\begin{lem}\label{pos_Cheeger_implies_weighted_grwth}
    Let $K$ be a connected locally finite graph equipped with a weight function $\w: V(K) \rightarrow \bR^+$. If $\Phi^{\w}_V(K) > 0$, then $K$ exhibits exponential growth of weights (with respect to the graph metric of $K$).
\end{lem}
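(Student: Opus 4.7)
The plan is to run the classical isoperimetric-to-growth argument, but tracked with $\w$-weights instead of counting measure. Set $\alpha \coloneqq \Phi^{\w}_V(K) > 0$ and fix a reference vertex $o \in V(K)$. Write $B_n \coloneqq B_K(o,n)$ for the closed ball of radius $n$ in the graph metric of $K$, and set $s_n \coloneqq \w(S_K(o,n))$ and $b_n \coloneqq \w(B_n) = \sum_{k=0}^{n} s_k$. Local finiteness of $K$ ensures $B_n$ is finite, so each $b_n$ is finite and $B_n$ is a legitimate test set in the infimum defining $\Phi^{\w}_V(K)$.

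The first step is to identify the outer-vertex boundary of a ball. Because any vertex $v$ at distance $n+1$ from $o$ admits a neighbor at distance $n$, we have $\partial_V B_n \subseteq S_K(o, n+1)$. Combining this with the isoperimetric inequality $\w(\partial_V B_n) \ge \alpha\, \w(B_n)$ gives the recursive bound
\[
s_{n+1} \ge \w(\partial_V B_n) \ge \alpha\, b_n.
\]

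The second step turns this into exponential growth of $b_n$. Writing $b_{n+1} = b_n + s_{n+1} \ge (1+\alpha)\, b_n$ and iterating from $b_0 = \w(o) > 0$ yields $b_n \ge (1+\alpha)^n \w(o)$, and then plugging back in gives $s_{n+1} \ge \alpha (1+\alpha)^n \w(o)$. Taking logarithms,
\[
\liminf_{n \to \infty} \frac{1}{n} \log s_n \ge \log(1+\alpha) > 0,
\]
which is precisely the exponential growth of weights along the spheres $S_K(o,n)$ required by \eqref{eq:spheres_growth}.

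There is no real obstacle here: the argument is essentially a weighted version of the textbook derivation that a positive Cheeger constant implies exponential volume growth. The only mild subtlety is making sure that $\partial_V B_n$ is contained in the next sphere (which follows immediately from the definition of the graph metric) and that $B_n$ has finite $\w$-mass (which follows from local finiteness, so that $B_n$ is a finite set and hence an admissible test set in the definition of $\Phi^{\w}_V(K)$).
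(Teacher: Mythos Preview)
Your proposal is correct and follows essentially the same route as the paper: both apply the weighted isoperimetric inequality to balls $B_n$ and iterate the resulting recursion to obtain $s_{n+1}\ge \alpha(1+\alpha)^n\w(o)$. The only cosmetic difference is that you recurse on $b_n$ via $b_{n+1}\ge(1+\alpha)b_n$, while the paper inducts directly on the sphere weights $s_n$; the content is identical.
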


\begin{proof}
    Fix $o \in V(K)$. Let $B_n(o) = \{x \in V(K): d(x,o) \leq n\}$.
    It suffices to show that for all $n \geq 1,$
        \[\w(S_n(o)) \geq \Phi_V^\w(K) \w(o) (1+\Phi_V^\w(K))^{n-1}\]
    For the base case, notice that $\{o\}$ is a finite set, thus $\w(S_1(o)) \geq \Phi_V^\w(K) \w(o)$ by the definition of $\Phi_V^\w(K)$.
    Now suppose the claim is true for $n$. Then by the definition of $\Phi_V^\w(K)$, we have
    \begin{align*}
        \w(S_{n+1}(o)) &\geq \Phi_V^\w(K) \w(B_n(o))  \\
        &= \Phi_V^\w(K) \sum_{i=0}^{n} \w(S_i(o)) \\
        &= \Phi_V^\w(K) \bigg(\w(o) + \Phi_V^\w(K) \sum_{i=1}^{n} \w(S_i(o))\bigg) \\
        &\geq \w(o) \Phi_V^\w(K) \bigg(1 + \Phi_V^\w(K) \sum_{i=1}^{n} (1 + \Phi_V^\w(K))^{i-1}\bigg) \\
        &= \Phi_V^\w(K)\bigg(1 + \Phi_V^\w(K) \cdot \dfrac{1 - (1 + \Phi_V^\w(K))^{n-1}}{1 - (1+ \Phi_V^\w(K))} \bigg)\\
        &= \Phi_V^\w(K)\w(o) (1 + \Phi_V^\w(K))^{n-1}
    \end{align*}
    This completes the proof.
\end{proof}

Analogously to \cite[Lemma 2.5]{Timar06neighb}, to prove \cref{thm:heavyrepulsion} we will need to establish the exponential growth of weights in a metric induced by an edge labeling. In the unimodular setting, it is stated in \cite{Timar06neighb} that if the labels are bounded, then exponential labeled growth is equivalent to exponential growth of the underlying graph. However, this is technically not quite the case; indeed, in a $3$-regular infinite tree $T_3$, where each edge is labeled by $2$, the spheres of odd radii are empty, so the $\liminf$ is technically $0$. Of course, this minor issue can be easily fixed by establishing the exponential growth along a sequence of disjoint annuli around a point, which we do in the following lemma.

\begin{lem}\label{lem: annuli}
    Let $K$ be a connected locally finite graph equipped with a weight function $\w: V(K) \rightarrow \bR^+$. Let $\ell: E(K)\to \{1, \cdots, N\}$ be a labeling of edges of $K$ for some $N \in \N$, and $d_\ell$ be the induced metric, while $d_K$ denotes the natural graph metric of $K$.
    Assume that $K$ exhibits exponential growth of weights with respect to $d_K$. Consider the sequence $(m_n)_{n \in \N}\subseteq\bN$ defined inductively by $m_1 = 1$, and $m_{n + 1} = N m_n +1$. 
     
    Then the annuli $(A_n)_{n \geq 1}$ defined by $A_{n} = \{x \in K: m_n \leq d_\ell(x,o) \leq N m_n\}$ are disjoint and satisfy
    \[\w(A_n) \geq c^{m_n}\]
    for some $c>1$ and all $n$ large enough.
\end{lem}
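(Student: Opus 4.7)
The plan is to handle the two claims separately: the disjointness of the $A_n$'s is a pure triangle-inequality check, while the weighted lower bound will follow by comparing $d_\ell$ with the intrinsic graph metric $d_K$ and then invoking the hypothesized exponential growth of weights with respect to $d_K$.

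First, for disjointness I would just unwind the definitions: $A_n$ consists of vertices whose $d_\ell$-distance from $o$ lies in the interval $[m_n, Nm_n]$, and by construction $m_{n+1} = Nm_n + 1 > Nm_n$, so the intervals $[m_n, Nm_n]$ are pairwise disjoint in $\N$; since a point lies in $A_n$ only through its $d_\ell$-distance, the $A_n$'s are disjoint subsets of $V(K)$.

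The main observation for the weighted lower bound is that, since every edge label lies in $\{1,\ldots,N\}$, along any path we have
\[
d_K(x,o) \;\le\; d_\ell(x,o) \;\le\; N\, d_K(x,o).
\]
In particular, any vertex $y$ with $d_K(y,o) = m_n$ automatically satisfies $m_n \le d_\ell(y,o) \le Nm_n$, and hence $y \in A_n$. This gives the crucial inclusion $S_K(o, m_n) \subseteq A_n$, which reduces the problem to a sphere-growth estimate in the intrinsic metric $d_K$.

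To finish, I would invoke the hypothesis that $K$ exhibits exponential growth of weights with respect to $d_K$: by definition, $\liminf_{k \to \infty} \tfrac{1}{k} \log \w\bigl(S_K(o,k)\bigr) > 0$, so there exist $c>1$ and $k_0 \in \N$ such that $\w\bigl(S_K(o,k)\bigr) \ge c^k$ for every $k \ge k_0$. Since $m_n \to \infty$, for all $n$ large enough we have $m_n \ge k_0$, and then
\[
\w(A_n) \;\ge\; \w\bigl(S_K(o, m_n)\bigr) \;\ge\; c^{m_n},
\]
which is the desired bound. I do not anticipate any serious obstacle here: the only (very mild) subtlety is making sure not to confuse the two metrics, and the whole argument is essentially a bookkeeping exercise once one notices that the $d_K$-sphere at radius $m_n$ sits inside the $d_\ell$-annulus $A_n$.
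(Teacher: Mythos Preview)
Your proof is correct and follows essentially the same approach as the paper: both arguments use the metric comparison $d_K(x,o) \le d_\ell(x,o) \le N\, d_K(x,o)$ to conclude $S_K(o,m_n) \subseteq A_n$, and then invoke the exponential growth hypothesis along the subsequence $(m_n)$. Your write-up is in fact slightly more explicit about the disjointness step and about extracting the constant $c>1$ from the $\liminf$ hypothesis.
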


\begin{proof}
    Let $i: (K, d_K) \rightarrow (K, d_\ell)$ denote the canonical map between these metric spaces which is identity on the vertices. Fix $o \in V(G)$ and let $S_n(o) = \{x \in K: d_K(o, x) = n\}$. Observe that for all $n \geq 1$ and any $x \in i(S_n(o))$, one has $n \leq d_\ell(x,o) \leq nN$. Thus, taking $(m_n)_{n \geq 1}$ as above we get that the family $(A_n)_{n \geq 1}$ is disjoint and $i(S_n(o)) \subseteq A_{n}$. Since $\w(S_n(o)) \geq c^n$ for all $n$ large enough, it follows that $\w(A_n) \geq \w(S_{m_n}(o)) \geq c^{m_n}$ for $n$ large enough. 
\end{proof}

%
\section{Proof of \cref{thm:heavyrepulsion}}\label{sec:proof_of_main}
%

\subsection{Neighboring relations and relentless merging}
A key step in the argument that establishes relentless merging of infinite clusters of Bernoulli$(p)$ percolation on unimodular transitive graphs is \cite[Proposition 7.1]{Haggstrom99}. It shows that if $p$ is such that there are infinitely many infinite clusters, then $\pr_p$-a.s.\ every infinite cluster is at unit distance from infinitely many other infinite clusters. In this subsection we generalize of this statement to a general invariant neighboring property of clusters, such as nonempty touching set (recovering the original statement), heavy touching set, finite touching, infinite-light touching, etc.

\begin{defn}
    Let $\cN$ be an invariant property of a subset of vertices of $G$.
    We say that a cluster $C\subset \go$ is an $\cN$\textbf{-neighbor} of a cluster $C'\subset \go$, and write $C\sim_{\cN} C'$, if the set of vertices in $C$ that is with in distance one of $C'$ satisfies the property $\cN$. We say that the property is \textbf{symmetric} for clusters if $C\sim_{\cN} C'$ implies that $C'\sim_{\cN} C$. 
\end{defn}
\begin{ex}
    Consider the properties listed above: being nonempty, heavy, finite, infinite-light. Suppose $d$ is the maximum degree in $G$, in particular the relative weight function $\w$ is bounded on edges by $d$ and $1/d$. This implies that all of these properties are symmetric. On the other hand a property such as ``finite touching with exactly $k$ elements" is not symmetric.
\end{ex}
\begin{rem}
    In this paper we focus on the symmetric properties. However most of the arguments in the proof of \cref{neighb_rel} extend beyond this assumption. Moreover, it could be extended to different ``neighboring properties" for instance to those defined on clusters at distances greater than 1.
\end{rem}
Let $\cN$ be an invariant property and $H_{\cN}$ be the corresponding graph, that is a graph whose vertex set are clusters and the edges placed between every pair of $\cN$-neighbors.

\begin{lem}\label{neighb_rel} For any symmetric invariant property $\cN$ every heavy cluster has either none or infinitely many $\cN$-neighboring clusters $\pr_p$-a.s., i.e.\ $\deg_{H_{\cN}}(C)\in\{0,\infty\}$.
\end{lem}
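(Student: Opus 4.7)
The plan is to adapt the insertion-tolerance argument of \cite[Proposition 7.1]{Haggstrom99} to the present weighted setting with a general symmetric $\cN$, via a minimality-of-degree argument. Suppose for contradiction that $\pr_p(\exists \text{ heavy } C \text{ with } 1 \le \deg_{H_\cN}(C) < \infty) > 0$. By the $\Gamma$-ergodicity of Bernoulli$(p)$ percolation, for each $k \in \N_{\ge 1}$ the event $E_k := \{\exists \text{ heavy } C \text{ with } \deg_{H_\cN}(C) = k\}$ has $\pr_p$-probability $0$ or $1$, so there is a minimal $k_0 \ge 1$ with $\pr_p(E_{k_0}) = 1$. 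A.s.\ no heavy cluster then has $\cN$-degree in $\{1,\ldots,k_0-1\}$.

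Conditional on $E_{k_0}$, I would use a $\Gamma$-equivariant symmetry-breaking device (such as the auxiliary Poisson point process on $\Gamma$ from the cluster graphing construction of \cref{sec:cluster_graphing}) to select a heavy cluster $C$ with $\deg_{H_\cN}(C)=k_0$, a distinguished $\cN$-neighbor $C_1$ of $C$, and an adjacent pair $u \in C$, $v \in C_1$. Existence of such $u,v$ follows because, without loss of generality, $\emptyset \notin \cN$, so $\tau(C,C_1)$ is nonempty. By insertion tolerance of Bernoulli$(p)$ percolation, the modified configuration $\omega' := \omega \cup \{uv\}$ occurs with positive conditional probability, and the merged cluster $\tilde C := C \cup C_1$ is heavy (as it contains $C$).

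The main obstacle is controlling $\deg_{H_\cN}(\tilde C)$ in $\omega'$, since we need to show it is strictly less than $k_0$ in order to contradict the minimality. For each cluster $D \notin \{C,C_1\}$ we have $\tau_{\omega'}(\tilde C, D) = \tau_\omega(C, D) \cup \tau_\omega(C_1, D)$, and for a general symmetric $\cN$ membership of this union in $\cN$ depends delicately on both summands. To handle this I would proceed by case analysis on whether $C_1$ is heavy or light: when $C_1$ is heavy, the minimality of $k_0$ forces $\deg_{H_\cN}(C_1) \in \{0,k_0,k_0+1,\ldots,\infty\}$, and the symmetry of $\cN$ ($C_1 \sim_\cN C$ since $C \sim_\cN C_1$) rules out the value $0$. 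In each case the goal is to exhibit a finite sequence of further edge insertions and deletions near the boundaries of $C$ and $C_1$ (each with positive conditional probability by the insertion/deletion tolerance of Bernoulli percolation) after which the heavy cluster produced has $\cN$-degree at most $k_0-1$, yielding the desired contradiction. The hard part of this step will be verifying that such a finite modification can always be chosen to strictly decrease the $\cN$-degree for arbitrary $\cN$; I expect that splitting further on how $\cN$ behaves under the relevant unions of touching sets (and possibly iterating the merging with several $C_i$'s simultaneously) makes the argument go through. Throughout, the $\Gamma$-equivariance of all selections is preserved by working in the cluster graphing framework and invoking the tilted mass transport principle (\cref{thm:TMTP}) to transfer the resulting finite-modification contradiction back to positive $\pr_p$-probability.
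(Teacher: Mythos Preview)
Your proposal has a genuine gap at the key step. After merging $C$ with an $\cN$-neighbor $C_1$, there is no reason the $\cN$-degree of $\tilde C = C \cup C_1$ should be strictly smaller than $k_0$. In fact, for the most natural choices of $\cN$ (e.g.\ ``nonempty touching set'' or ``heavy touching set''), merging typically \emph{increases} the degree: if $C$ and $C_1$ have disjoint sets of other $\cN$-neighbors, then $\deg_{H_\cN}(\tilde C) \ge (k_0 - 1) + (\deg_{H_\cN}(C_1) - 1) \ge 2k_0 - 2$, which is $\ge k_0$ whenever $k_0 \ge 2$. Your minimality hypothesis only tells you no heavy cluster has degree in $\{1,\dots,k_0-1\}$; it gives you no leverage against the degree going up. The hope that ``splitting further on how $\cN$ behaves under unions'' or ``iterating the merging'' will fix this is not a proof, and for an arbitrary symmetric invariant $\cN$ there is no monotonicity of $\cN$ under unions to exploit.

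The paper avoids this problem by invoking the indistinguishability of heavy clusters (Tang \cite{Pengfei18}), which forces $H_\cN$ to be exactly $d$-regular a.s.\ for some deterministic $d$. This is much stronger than your minimal-$k_0$ statement: now a contradiction arises from producing, via finite insertion/deletion, any heavy cluster whose degree differs from $d$ in \emph{either} direction, while the remaining heavy clusters are unaffected and keep degree $d$. The paper then does a case analysis on $d$: finite components of $H_\cN$ are ruled out (hence $d\neq 1$); for $2<d<\infty$ one analyzes common $\cN$-neighbors of an adjacent pair; and the case $d=2$ (where $H_\cN$ is a single bi-infinite line) requires a non-obvious deletion-then-reinsertion argument along a path to create a vertex of degree $\ge 3$. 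None of this structure is visible from the minimality-of-degree viewpoint. Finally, the TMTP plays no role here; positive probability after modification comes directly from insertion/deletion tolerance, not from mass transport.
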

\begin{proof}
By the indistinguishability of heavy clusters of Bernoulli$(p)$ percolation \cite[Theorem 1.1]{Pengfei18} $H_{\cN}$ is $d$-regular a.s., for some possibly infinite $d.$ Suppose $d\in[1,\infty)$, then $H_{\cN}$ must contain only infinite components a.s. Indeed, by insertion tolerance, given a finite component in $H_{\cN}$ insert edges (in the underlying percolation configuration $\go$) between neighboring clusters. This yields that with positive probability $H_{\cN}$ contains an isolated vertex, moreover inserting these edges did not affect any other cluster, thus it contradicts $d$-regularity of $H_{\cN}$. In particular, $d\neq 1$. Similarly, there cannot be more than one infinite component in $H_{\cN}$.

If $d\in\textcolor{violet}{ ( } 2,\infty)$ then any pair of $\cN$-neighbors must have exactly $d-2$ common $\cN$-neighbors and each of them has one more $\cN$-neighbor. Otherwise placing an edge between them increases the degree in $H_{\cN}$. Consider such a pair $C_1$ and $C_2$ and denote one of their common $\cN$-neighbors as $C_3$, while we let $D_1$ (resp.\ $D_2$) be a $\cN$-neighbor of $C_1$, but not $C_2$ (resp.\ $C_2$, but not $C_1$). 
Note that $C_3$ has to be a $\cN$-neighbor with exactly one of $D_1$ or $D_2$ 
(inserting an edge between $C_1$ and $C_2$ forces there to be a unique cluster $\cN$-neighboring $C_1\cup C_2$ but not $C_3$)
, say $D_2$.
Now inserting an edge between $C_1$ and $C_3$ yields a cluster that is a $\cN$-neighbor with $C_2$ and every single one of its $\cN$-neighbors, a contradiction. 

Thus, still assuming that $d<\infty$, we have that $d=2$ and every single connected component of $H_{\cN}$ is isomorphic to a $\bZ$-line. We first claim that there can be at most one such component. Otherwise, there must be a path in $\go$ that connects two clusters from two different such infinite lines of $H_{\cN}$ and does not intersect any other clusters in them. Inserting such a path affects at most finitely many clusters and increases the degree in $H_{\cN}$ of at least two two of them.

Suppose $H_{\cN}$ is isomorphic to a single $\bZ$-line. 
Notice that again by insertion tolerance there cannot be a path connecting any heavy cluster to a non $\cN$-neighboring heavy cluster that does not intersect its $\cN$-neighbors.
Indeed, inserting such a path increases the degree in $H_{\cN}$. 
Let heavy clusters $C_i$, $i\in\{1,\ldots,5\}$, be such that $C_i$ and $C_{i+1}$ are $\cN$-neighboring clusters. 
Pick a path $\pi$ in $\go$ connecting $C_2$ and $C_4$, and delete the $1$-ball around this path. 
This splits clusters $C_i$, $i\in\{1,2,3\}$, into finitely many clusters. 
However, some of them still must be heavy and (by the argument above) after this modification $H_{\cN}$ still must be isomorphic to a single $\bZ$-line. 
That is, there are clusters $D_i\subset C_2\cup C_3\cup C_4$, $i\in\{1,\ldots,k\}$, $k\ge 3$, such that all of the following hold:
\begin{enumerate}
    \item $D_1\subset C_2$ is a $\cN$-neighbor of $C_1$,
    \item $D_k\subset C_4$ is a $\cN$-neighbor of $C_5$,
    \item there is a heavy cluster $D_\ell\subset C_3$,
    \item $D_i$ and $D_{i+1}$ are $\cN$-neighboring clusters for all $i \in \{1, \cdots, k - 1\}$.
\end{enumerate}

On such an event, insert $\pi$ and insert edges connecting $\pi$ to $D_1$ and $D_k$ (such edges exist because each $D_i$ intersects the $2$-ball around $\pi$).
After this modification, the degree (in $H_{\cN}$) of the cluster $C'$ that contains $D_1, \pi,$ and $D_k$ is at least three, as $C'$ has an edge to $C_1$, an edge to $C_5$, and an edge to the component of $H_{\cN}$ that contains $D_\ell$ (see \cref{fig:d=2 line}). Contradiction, thus $d\in\{0,\infty\}$.

\tikzset{
		big dot/.style={
			circle, inner sep=0pt, 
			minimum size=1.5mm, fill=black
		}
	}
\begin{figure}[htb]
\begin{center}
	\begin{tikzpicture}[thick, scale=0.6]
	
   \draw (-11,0) -- (-10,0) -- (-10,8) -- (-11,8);
   \draw (-10.5,0) node[below] {$C_1$};
   \draw (-8,0) rectangle (-4,8); 
   \draw (-6,0) node[below] {$C_2$};
   \draw (-2,0) rectangle (2,8);
   \draw (0,0) node[below] {$C_3$};
   \draw (4,0) rectangle (8,8);
   \draw (6,0) node[below] {$C_4$};
   \draw (11,0) -- (10,0) -- (10,8) -- (11,8);
   \draw (10.5,0) node[below] {$C_5$};
    \draw [<->,line width=0.7mm] (-9.9,4) to node[below] {heavy} (-8.1,4);
    \draw [<->,line width=0.7mm] (-3.9,4) to node[below] {heavy} (-2.1,4);
    \draw [<->,line width=0.7mm] (8.1,4) to node[below] {heavy} (9.9,4);
    \draw [<->,line width=0.7mm] (2.1,4) to node[below] {heavy} (3.9,4);

    \draw[dotted] (-7,5.2) rectangle (7,6.8);
    \draw plot [smooth]  coordinates {(-6,5.5) (-4,6.5) (-1,5.5) (1,5.6) (4,6.2) (6,5.5)};
    \draw (0,5.7) node[above] {$\pi$};
    \node[big dot] (x1) at (-6,5.5) {};
    \node[big dot] (x2) at (6,5.5) {};
    \node[big dot] (y1) at (-7.1,4.5) {};
    \node[big dot] (y2) at (7.1,4.5) {};
    \draw [blue] (x1) to (y1);
    \draw [blue] (x2) to (y2);
    
    \draw[dashed] (-7.9,0.1) rectangle (-6.5,4.9);
    \draw (-7.2,0.5) node[above] {$D_1$};
    \draw[dashed] (-5.4,0.1) rectangle (-4.1,4.9);
    \draw (-4.7,0.5) node[above] {$D_2$};
    \draw [<->,line width=0.7mm] (-6.4,4) to (-5.5 ,4);
    \draw[dashed] (-1.8,0.1) rectangle (1.8,4.9);
    \draw (0,0.5) node[above] {$D_3$};
    \draw[dashed] (7.9,0.1) rectangle (6.5,4.9);
    \draw (7.2,0.5) node[above] {$D_5$};
    \draw[dashed] (5.4,0.1) rectangle (4.1,4.9);
    \draw (4.7,0.5) node[above] {$D_4$};
    \draw [<->,line width=0.7mm] (6.4,4) to (5.5,4);
    \end{tikzpicture}	
\end{center}
\caption{Heavy clusters $C_1,C_2,\dots,C_5$ are depicted by solid rectangles, the 1-neighborhood $B_1(\pi)$ of a path $\pi$ is depicted by the dotted rectangle, while dashed rectangles depict the acquired heavy clusters after the deletion of $B_1(\pi)$ (in this case $k=5$). The bold double arrows depict a heavy touching relation. Finally, the blue edges are the edges inserted at the end of the argument to connect $D_1$ and $D_k$, yielding a heavy cluster with more than $2$ heavy neighbors.}\label{fig:d=2 line}
\end{figure}
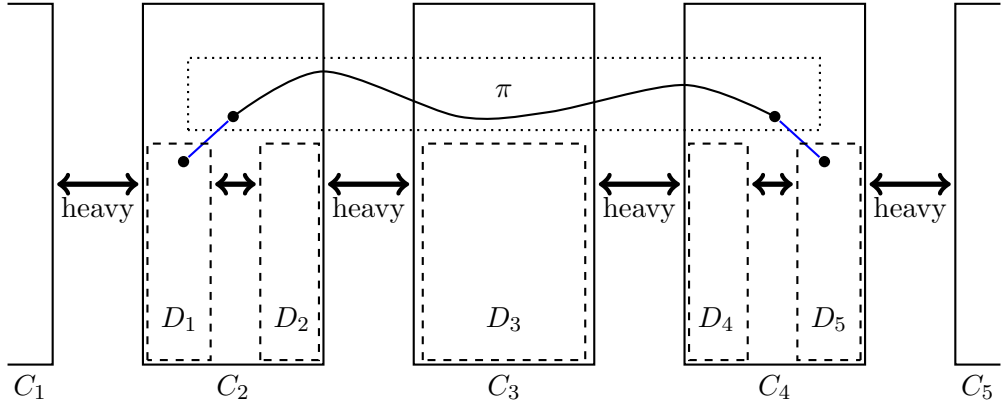
\end{proof}

\begin{cor}\label{inf touching}For $p\in (p_h,p_u),$ every infinite cluster is within distance one of infinitely many other infinite clusters $\pr_p$-a.s.
\end{cor}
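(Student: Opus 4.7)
The plan is to apply \cref{neighb_rel} with the symmetric invariant property $\cN$ = ``nonempty.'' The touching set of $C$ with respect to $C'$ is nonempty precisely when some vertex of $C$ is adjacent in $G$ to some vertex of $C'$; since adjacency is symmetric, the touching set of $C'$ with respect to $C$ is then also nonempty, so $\cN$ is indeed symmetric for clusters. Consequently, \cref{neighb_rel} will yield that every heavy cluster has either $0$ or $\infty$ many touching neighbors $\pr_p$-a.s.

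Next I would reduce to the heavy case. For $p \in (p_h, p_u)$ a heavy cluster exists, so by \cite[Theorem 4.1.6]{Haggstrom99} (cited in the introduction to justify the four-phase diagram), no infinite light cluster can coexist with it. Hence every infinite cluster is heavy in this regime, and it suffices to exclude the $0$-case produced by \cref{neighb_rel}.

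To rule out the $0$-case, I would use the connectedness of $G$. Fix a heavy cluster $C$ and suppose for contradiction it has no touching neighbor. Then for every edge $(u,v) \in E(G)$ with $u \in C$, the endpoint $v$ must also lie in $C$; otherwise the cluster of $v$ would touch $C$ along this edge. By connectedness of $G$ this forces $C = V$, i.e.\ $\go$ has a unique cluster. But $p < p_u$, so $\pr_p$-a.s.\ there are infinitely many infinite clusters, a contradiction. Therefore every heavy (equivalently, every infinite) cluster has infinitely many touching neighbors, which are themselves infinite (hence heavy) clusters.

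I do not foresee any real obstacle: \cref{neighb_rel} does the heavy lifting, and the remaining work is a short phase-diagram observation together with a one-line connectedness argument.
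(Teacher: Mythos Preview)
Your overall strategy---apply \cref{neighb_rel} with $\cN=$ ``nonempty'' and then exclude the degree-$0$ case---is exactly what the paper has in mind (the corollary is stated without proof right after \cref{neighb_rel}). However, your last paragraph contains a genuine gap.

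The proof of \cref{neighb_rel} uses indistinguishability of \emph{heavy} clusters to obtain that $H_\cN$ is $d$-regular; in other words, what the lemma actually controls is the number of \emph{heavy} $\cN$-neighbors of a heavy cluster. Your connectedness argument (``if $C$ has no touching neighbor then $C=V$'') only shows that $C$ touches \emph{some} cluster; that cluster may well be finite. So it does not exclude $d=0$ in the relevant sense, and your concluding clause ``which are themselves infinite (hence heavy) clusters'' is asserted without justification. Conversely, if you read \cref{neighb_rel} as counting neighbors among \emph{all} clusters, then your connectedness argument does exclude $0$, but now the conclusion ``infinitely many touching neighbors'' does not tell you any of them are infinite---a heavy cluster could a priori be surrounded only by finite clusters, with all other heavy clusters at distance $\ge 2$.

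The fix is short. By indistinguishability, either every heavy cluster touches some other heavy cluster, or none does. Assume the latter. Since $p\in(p_h,p_u)$ there exist two heavy clusters $C_1,C_2$; take a shortest $G$-path between them and insert all its closed edges except the last one incident to $C_2$. This merges $C_1$ with finitely many non-heavy clusters into a heavy cluster at distance $1$ from the unchanged heavy cluster $C_2$, contradicting (via insertion tolerance) the assumption. Hence $d\ge 1$, so $d=\infty$ by \cref{neighb_rel}, and since every infinite cluster is heavy in this regime, the corollary follows.
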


Let $\wt{\mathbf{P}}$ denote the probability measure that assign an independent Uniform$[0,1]$ random variable $U_e$ to each edge $e$ in $G$, in particular, deleting all edges with labels above a threshold $p$, yields a monotone coupling of Bernoulli$(p)$ percolation process on $G$.
We denote the corresponding configurations by $\go_p$ and call a cluster $C\in\go_p$ a \textbf{$p$-cluster}.  

\begin{prop}[Relentless merging of heavy clusters]\label{merging} Let $G$ be an infinite, locally finite, connected, quasi-transitive nonunimodular graph. Then $\wt{\mathbf{P}}$-a.s.\ for every $p_1\in (p_c,p_u)$ and $p_2\in(p_h,1]$ such that $p_1 < p_2$, every infinite  $p_2$-cluster contains infinitely many infinite $p_1$-clusters.
\end{prop}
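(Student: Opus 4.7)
The plan is to adapt the proof of \cite[Theorem~1.5]{Haggstrom99}, substituting our \cref{inf touching} for the role of \cite[Proposition~7.1]{Haggstrom99}. Using monotonicity of $\wt{\mathbf{P}}$ in $p$ together with continuity of the connectivity function at all but countably many $p$, a standard countable union over rational thresholds reduces the problem to fixing rationals $p_1<p_2$ with $p_1\in(p_c,p_u)$ and $p_2\in(p_h,1]$ and proving the conclusion $\wt{\mathbf{P}}$-almost surely. The argument then splits according to the structure of $\omega_{p_2}$.

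If $\omega_{p_2}$ has a unique infinite cluster $U$, then $U$ is heavy, since $p_2>p_h$ forbids coexistence of light and heavy infinite clusters by \cite[Theorem~4.1.6]{Haggstrom99}. Under the monotone coupling $\omega_{p_1}\subseteq\omega_{p_2}$ each infinite $p_1$-cluster is contained in an infinite $p_2$-cluster, hence in $U$. Since $p_1\in(p_c,p_u)$ yields $\wt{\mathbf{P}}$-a.s.\ infinitely many infinite $p_1$-clusters, $U$ absorbs all of them.

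Otherwise $\omega_{p_2}$ has infinitely many heavy clusters, and I would argue by contradiction: assume that with positive probability some heavy $p_2$-cluster contains only finitely many infinite $p_1$-clusters. Viewing $(\omega_{p_1},\omega_{p_2})$ as an $\mathrm{Aut}(G)$-invariant marked Bernoulli process and applying heavy-cluster indistinguishability \cite[Theorem~1.1]{Pengfei18} to the invariant properties ``the heavy $p_2$-cluster contains at least $k$ infinite $p_1$-clusters'', I would deduce that this count is $\wt{\mathbf{P}}$-a.s.\ equal to a common value $N\in\{1,2,\ldots\}$; the lower bound $N\geq 1$ comes from the fact that every infinite $p_1$-cluster (of which there are $\wt{\mathbf{P}}$-a.s.\ infinitely many since $p_1>p_c$) is contained in a unique heavy $p_2$-cluster. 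By \cref{inf touching} at $p_2$, I would then pick a heavy $p_2$-cluster $D$, a distinct heavy $p_2$-neighbor $D'$, and an edge $e$ between them with $U_e>p_2$. Resampling $U_e$ into $(p_1,p_2]$---an event of positive probability by finite energy of $\wt{\mathbf{P}}$---opens $e$ in $\omega_{p_2}$ while preserving $\omega_{p_1}$; the merged heavy $p_2$-cluster $D\cup D'\cup\{e\}$ then contains $2N>N$ infinite $p_1$-clusters, contradicting the a.s.\ constancy of $N$.

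The main technical point will be invoking heavy-cluster indistinguishability for the joint process $(\omega_{p_1},\omega_{p_2})$ rather than for a single Bernoulli percolation. Conditional on $\omega_{p_2}$, however, $\omega_{p_1}$ is obtained by i.i.d.\ Bernoulli$(p_1/p_2)$ retention of $\omega_{p_2}$-edges, so the joint process is a factor of i.i.d.\ on $E$ and remains insertion- and deletion-tolerant in the $\omega_{p_2}$-coordinate; the proof of \cite[Theorem~1.1]{Pengfei18} should go through with only cosmetic changes in this marked setting, which is the sort of straightforward adaptation the paper alludes to. A secondary minor point is extending \cref{inf touching} from $p\in(p_h,p_u)$ to the (possibly relevant) endpoint $p_2=p_u$ when infinitely many heavy clusters are present; but this is immediate from \cref{neighb_rel} applied to the nonempty-touching-set property, whose proof never uses $p_2<p_u$.
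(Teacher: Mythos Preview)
Your proposal is correct and, for $p_1\in(p_h,p_u)$, coincides with the paper's approach: both simply run the \cite[Theorem~4.1.4]{Haggstrom99} merge argument with \cref{inf touching} substituted for \cite[Proposition~7.1]{Haggstrom99}. The one genuine difference is how the subcase $p_1\in(p_c,p_h]$ is handled. The paper does not invoke indistinguishability there at all; instead it exploits that every infinite $p_1$-cluster is \emph{light}. If a heavy $p_2$-cluster $C$ contained only finitely many infinite $p_1$-clusters, the (finitely many) vertices of maximal weight in those light clusters would form a nonempty finite invariantly-defined subset of the heavy set $C$, which immediately contradicts the TMTP. Your uniform treatment via the merge trick is also valid, but it trades that two-line mass-transport contradiction for an appeal to \cite[Theorem~1.1]{Pengfei18} in the marked process $(\omega_{p_1},\omega_{p_2})$; routine, as you note, but strictly more machinery than the paper needs for this subcase. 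Your remarks on extending \cref{inf touching} to $p_2=p_u$ via \cref{neighb_rel} and on the countable reduction to rational thresholds are both fine and match what the paper takes as understood.
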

\begin{proof}
If $p_h(G)=p_u(G)$ the statement is trivial we may assume that it is not the case.
Suppose $p_1\in (p_c, p_h]$, then $\wt{\mathbf{P}}$-a.s.\ all $p_1$-clusters are light (since we assumed that $p_h<p_u$ the absence of heavy clusters at level $p=p_h$ follows from \cite[Corollary 6.3]{Pengfei18} or \cite[Theorem 1.8]{wamen}). We argue by contradiction, suppose that w.p.p.~there are is an infinite $p_2$-cluster $C$ that contains only finitely many infinite $p_1$-clusters. Note that $C$ heavy and it contains at least one infinite $p_1$-cluster by \cite[Theorem 4.1.3]{Haggstrom99}. On this event, since all $p_1$-clusters are light, there is a finite set of vertices $A(p_1,p_2)\subset C$ that consists of points in $C$ that were of maximal weight in some infinite $p_1$-cluster. This violates the TMTP.

The proof of the remaining statement is the same as in \cite[Theorem 4.1.4]{Haggstrom99} as in the original argument the only place where the assumption of the unimodularity of $G$ is used is when \cite[Proposition 7.1]{Haggstrom99} is applied, which we now replace by \cref{inf touching}.
\end{proof}

\subsection{Number of $\w$-nonvanishing ends along $\htau$-classes}\label{sec:}
We start with the following basic ``subsampling" lemma.

\begin{lem}\label{subsamplig}
Let $A=\{a_n\}_{n\in\bN}$ be a set equipped with a weight function $\w:A\to\bR$ such that $\w(A)=\infty$. To each element $a_n$ associate an independent Bernoulli$(p_n)$ random variable $X_n$, with $p_n>c$ for some constant $c>0$. Let $B\subseteq A$ consist of elements $a_n$ for which that $X_n=1$. Then a.s.\ $\w(B)=\infty$.
\end{lem}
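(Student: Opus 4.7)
My plan is to reduce to the case of uniformly bounded weights and then apply a second-moment argument. Note first that $\set{\w(B)=\infty}$ is a tail event for the independent sequence $(X_n)_{n\in\bN}$, so by Kolmogorov's $0$--$1$ law its probability is either $0$ or $1$; thus it suffices to exclude the possibility that $\w(B)<\infty$ almost surely.

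I would first split according to the magnitude of the weights. Let $N_{\ge 1}\coloneqq\set{n : \w(a_n)\ge 1}$. If $N_{\ge 1}$ is infinite, then $\sum_{n\in N_{\ge 1}} p_n\ge c\abs{N_{\ge 1}}=\infty$, so by the second Borel--Cantelli lemma applied to the independent events $\set{X_n=1}$ for $n\in N_{\ge 1}$, almost surely $X_n=1$ for infinitely many $n\in N_{\ge 1}$; each such $n$ contributes weight at least $1$ to $\w(B)$, giving $\w(B)=\infty$ immediately.

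Otherwise $N_{\ge 1}$ is finite, and after discarding these finitely many indices I may assume $\w(a_n)\le 1$ for every $n$ while still $\sum_n\w(a_n)=\infty$. Setting $S_N\coloneqq\sum_{n=1}^N X_n\w(a_n)$, independence together with $\w(a_n)\le 1$ yields
\[
\E S_N\ge c\sum_{n=1}^N\w(a_n)\longrightarrow\infty \quad\text{and}\quad \var S_N\le \sum_{n=1}^N\w(a_n)^2\le\sum_{n=1}^N\w(a_n)\le\tfrac{1}{c}\E S_N,
\]
so $\var S_N/(\E S_N)^2\to 0$ as $N\to\infty$. Chebyshev's inequality then forces $S_N/\E S_N\to 1$ in probability, hence $S_N\to\infty$ in probability, and since $(S_N)_N$ is nondecreasing this upgrades to $S_N\to\infty$ almost surely.

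The only real subtlety is the unbounded-weight scenario, which is what motivates peeling off $N_{\ge 1}$ first; an equivalent single-shot route would apply Kolmogorov's three-series theorem (with truncation level $1$) to $\sum_n X_n\w(a_n)$: convergence would force both $\sum_n p_n\ind_{\w(a_n)>1}<\infty$ and $\sum_n p_n\w(a_n)\ind_{\w(a_n)\le 1}<\infty$, which combined with $p_n>c$ directly contradict $\w(A)=\infty$.
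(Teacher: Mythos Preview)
Your proof is correct and follows essentially the same approach as the paper's: both split off the case of unbounded weights (you threshold at $1$, the paper at $\limsup \w(a_n)=0$) and handle it by Borel--Cantelli, then run a second-moment/Chebyshev argument on the bounded remainder to force the partial sums to infinity. The only cosmetic differences are that the paper first passes to i.i.d.\ Bernoulli$(c)$ by stochastic domination before applying Chebyshev, while you work directly with the $p_n$'s and add the (unnecessary but harmless) $0$--$1$ law observation and the three-series alternative.
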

\begin{proof}
It is enough to show that $\limsup_{n\to\infty} \w(a_n)\ind_{a_n\in B}>0.$
Observe that $\w(B)$ stochastically dominates the random variable $W\coloneqq \sum_{n\in\mathbb{N}}Y_n\w(a_n),$ where $\{Y_n\}_{n\in\N}$ is a sequence of independent Bernoulli$(c)$ random variables.

Suppose $\limsup_{n\rightarrow\infty} \w(a_n)=0$, otherwise we are done. Let $W_N\coloneqq  \sum_{n=1}^N Y_n \w(x_n)$ be the truncated version of $W$. Then $\E(W_N) = c\sum_{n=1}^N \w(a_n)$ and $\var(W_N) = c(1-c)\sum_{n=1}^N \w(a_n)^2.$ We now show that $\lim_{N\rightarrow\infty}\pr(W_N < \frac{1}{2}\E (W_N))= 0$ and hence $W$ is infinite almost surely. Indeed, by Chebyshev's inequality,
\[
\pr\left(W_N < \frac{1}{2}\E (W_N)\right) = \pr\left(\E (W_N)-W_N >\frac{1}{2}\E (W_N)\right)\leq  4\frac{\var(W_N)}{\E (W_N)^2}.
\]
Since $\lim_{n\rightarrow\infty} \w(a_n) = 0,$ we have that for large enough $N,$ $\sum_{n=1}^N \w(a_n)^2\leq \sum_{n=1}^N \w(a_n),$ so 
\[
\lim_{N\rightarrow\infty}\pr\left(W_N <  \frac{1}{2}\E (W_N)\right) \leq \lim_{N\rightarrow\infty}\frac{4(1-c)}{\sum_{n=1}^N\w(a_n)}=0.
\]
\end{proof}

\begin{lem}\label{htau}
Consider a jointly invariant random process $(\go,\htau_\go)$, where $\go$ is a configuration of a Bernoulli$(p)$ bond percolation, $\htau_\go$ is a partition of $\go$ into the touching classes. 
Then any cluster $C\subseteq\go$ that neighbors another cluster $C'\subseteq\go$ in a heavy set has at least one $\w$-nonvanishing end along $\htau_\go(C,C')$ a.s.
\end{lem}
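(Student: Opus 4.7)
My plan is to proceed in two steps. First I use the subsampling Lemma~\ref{subsamplig} to transfer the heaviness hypothesis from $\tau(C, C')$ to $\htau_\go(C, C')$. Second, I argue that a heavy, jointly invariant subset of the locally finite cluster $C$ must cluster with bounded-below weights near some end of $C$, yielding a $\w$-nonvanishing end.

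\textbf{Step 1 (subsampling).} Let $d := \max_v \deg_G(v) < \infty$ be the maximum degree. For each $x \in \tau(C, C')$, the cluster $C'$ is one of the at most $d$ clusters of $\go$ other than $C$ lying within unit distance of $x$, so, conditional on $\go$, the probability that $x$ selects $C'$ in the construction of $\htau_\go$ is at least $1/d$. Because different vertices make their selections independently given $\go$, Lemma~\ref{subsamplig} applies to $A = \tau(C, C')$ with uniform lower bound $1/d$, yielding $\w(\htau_\go(C, C')) = \infty$ a.s.

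\textbf{Step 2 (extracting a $\w$-nonvanishing end).} Fix a vertex $o \in C$ and, for $n \ge 1$, let $B_n^C(o)$ be the ball of radius $n$ around $o$ in the cluster metric of $C$. Since $C$ is locally finite, each $B_n^C(o)$ is finite, so $\w(\htau_\go(C, C') \cap B_n^C(o)) < \infty$; combined with Step~1 this gives $\w(\htau_\go(C, C') \setminus B_n^C(o)) = \infty$ for all $n$. The finite set $B_n^C(o)$ disconnects $C$ into finitely many components, so at least one component $K_n$ carries infinite $\w$-mass of $\htau_\go(C, C')$. Choosing the $K_n$ nested by a standard K\"onig-type compactness argument defines an end $\xi$ of $C$ such that $\w(\htau_\go(C, C') \cap U) = \infty$ for every neighborhood $U$ of $\xi$ in $C$.

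To conclude that $\xi$ is in fact $\w$-nonvanishing along $\htau_\go(C, C')$, I need a sequence $(x_n) \subseteq \htau_\go(C, C')$ converging to $\xi$ in $C$ with $\inf_n \w(x_n) > 0$. Here is where I use the joint $\Gamma$-invariance of $(\go,\htau_\go)$ together with the quasi-transitive, nonunimodular structure of $G$: by invariance, membership of a vertex $v$ in a touching class depends (in probability) only on the $\Gamma$-orbit of $v$; compactness of point stabilizers forces $\w$ to be constant on $\ker\Delta$-orbits, while nonunimodularity makes these $\ker\Delta$-orbits infinite inside each $\Gamma$-orbit. An ergodicity/Borel--Cantelli argument on the quasi-transitive action then implies that, almost surely, $\htau_\go(C, C')$ contains infinitely many vertices of some common positive weight $c$; pigeonholing these over the finitely many components of $C \setminus B_n^C(o)$ forces an infinite subcollection to converge to the end $\xi$ produced above.

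\textbf{Main obstacle.} The first step is routine. The hard part is the final upgrade from ``$\w(\htau_\go(C, C'))$ is infinite near $\xi$'' to ``a subsequence in $\htau_\go(C, C')$ converging to $\xi$ with $\w$ bounded below'': heaviness alone does not force $\limsup_{x\in\htau_\go(C,C')}\w(x) > 0$ (one could a priori have infinite total mass carried by vertices of vanishing weights). The extra structural input from $\Gamma$-invariance and nonunimodularity --- ensuring that each positive weight value appearing in $\htau_\go(C, C')$ appears infinitely often --- is essential.
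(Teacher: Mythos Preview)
Your Step~1 matches the paper exactly. The gap is in Step~2, in the upgrade from ``heavy near $\xi$'' to ``$\w$-nonvanishing along $\htau_\go(C,C')$''. The sketched ergodicity/Borel--Cantelli argument does not work: the class $\htau_\go(C,C')$ is indexed by a \emph{specific} ordered pair of clusters and is not a $\Gamma$-invariant subset of $V$, so $\Gamma$-invariance of the law of $(\go,\htau_\go)$ yields no per-realization symmetry of this particular class. That each $\ker\Delta$-orbit is infinite tells you there are infinitely many vertices of each given weight \emph{in $G$}, not in $\htau_\go(C,C')$; a Borel--Cantelli argument would at best place infinitely many weight-$c$ vertices into \emph{some} touching class, not into this one. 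Your own ``main obstacle'' paragraph already correctly flags that heaviness alone does not force $\limsup_{x\in\htau_\go(C,C')}\w(x)>0$, and the proposed structural fix does not close that gap.

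The paper bypasses the end-selection entirely and rules out the bad scenario with a one-line tilted mass transport. Assume for contradiction that with positive probability $\htau_\go(C,C')$ is heavy but $C$ has no $\w$-nonvanishing end along it. Since any infinite subset of the locally finite cluster $C$ subconverges to an end, the weights on $\htau_\go(C,C')$ must attain their supremum on a nonempty finite set. Now let each vertex $x\in V$ split unit mass equally among the vertices of maximal weight in its class $[x]_{\htau_\go}$ (sending mass to itself if no maximum exists). The expected outgoing mass is $1$, while on the event above a maximal-weight vertex receives, on the weighted side of the TMTP, at least $\tfrac{1}{k}\w^\rho([ \rho ]_{\htau_\go})=\infty$. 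This contradiction replaces your entire Step~2 and needs neither the K\"onig construction nor any facts about $\ker\Delta$.
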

\begin{proof}
By local finiteness of $G$ each vertex in $C$ that neighbors $C'$ belongs to $T$ with probability greater or equal to $1/d_{\max}$. If the neighboring set was originally heavy by \cref{subsamplig} $\htau_\go(C,C')$ is also heavy. 

Suppose that with positive probability there are two heavy clusters $C$ and $C'$ that are heavy neighbors but such that $\htau_\go(C,C')$ does not have a $\w$-nonvanishing end in $C$.
Then it must contain finitely many vertices of maximal weight. Now, given $\htau_{\go}$ let every vertex $x\in V$ distribute unit mass between all vertices of maximal weight in $[x]_{\htau_{\go}}$; if there are no vertices of maximal weight $x$ sends mass one to itself. Clearly, the expected sent out mass is finite, while on the event above the expected mass received is infinite. This contradicts TMTP.
\end{proof}

The following \cref{infinitely many nonvanishing ends} is an analogue of \cite[Lemma 2.3]{Timar06neighb}.

\begin{prop}\label{infinitely many nonvanishing ends}
Consider a jointly invariant random process $(\go,\htau_\go)$, where $\go$ is a configuration of a Bernoulli$(p)$ bond percolation, $\htau_\go$ is a partition of $\go$ into the touching classes. 
Suppose the conclusion of \cref{thm:heavyrepulsion} does not hold, then with positive probability there is a cluster that has infinitely many (and no isolated) $\w$-nonvanishing ends along some $\htau_\go$-class.
\end{prop}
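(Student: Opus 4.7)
The plan is to argue by contradiction: suppose the conclusion of \cref{thm:heavyrepulsion} fails, so with positive probability there exist heavy clusters $C,C'$ whose touching set $\tau(C,C')$ is heavy. The easy first move is to transfer heaviness from the touching set to the touching class: each vertex of $\tau(C,C')$ independently sits in $\htau_\go(C,C')$ with probability at least $1/d_{\max}$, so \cref{subsamplig} gives that $T \coloneqq \htau_\go(C,C')$ is itself heavy with positive probability, and then \cref{htau} produces at least one $\w$-nonvanishing end of $C$ along $T$. This furnishes the base case.

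The main task is to upgrade ``at least one'' to ``infinitely many, none isolated'', within a single $\htau_\go$-class. I would run two TMTP arguments in the style of the proof of \cref{htau}, combined with insertion/deletion tolerance as in \cref{neighb_rel}. First, to pin down the number $N(C,T)$ of $\w$-nonvanishing ends of $C$ along $T$: if $N(C,T)$ were finite and positive on a positive-probability event, I would invariantly extract a canonical finite marker inside $T$ witnessing the finitely many end-directions (e.g., the $\w$-heaviest vertices in an appropriate invariant finite cut-set produced by the ends), show that the resulting set is light per cluster while the heavy set $T$ routes mass to it, and deduce a finite-out/infinite-in contradiction with the TMTP. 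Since \cref{htau} already rules out $N(C,T)=0$ while $T$ is heavy, this leaves $N(C,T)=\infty$ with positive probability. An analogous TMTP argument rules out isolated $\w$-nonvanishing ends: an isolated end $\xi$ is witnessed by a finite cut $F_\xi\subset C$ after which $\xi$ is the unique $\w$-nonvanishing end of $T$ in its component of $C\setminus F_\xi$, again producing a canonical finite light marker and the same TMTP contradiction.

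The main obstacle, and where the nonunimodular setting requires genuinely new ideas beyond \cite{Timar06neighb}, is producing these ``canonical finite markers'' in a fully $\Gamma$-equivariant way while remaining inside a single $\htau_\go$-class rather than spreading markers across classes indexed by different neighboring clusters of $C$. Here I would exploit the auxiliary randomness already present in the definition of $\htau_\go$ (the uniform choice of neighboring cluster at each vertex) to break ties and localize the markers; combined with insertion/deletion tolerance on $\go$ done jointly with a local resampling of $\htau_\go$ to preserve joint invariance, this should suffice to close the argument.
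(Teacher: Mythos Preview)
Your proposal has a genuine gap at the crucial step, namely upgrading from ``at least one $\w$-nonvanishing end along $T$'' to ``at least three''. The TMTP marker argument you sketch works only once you already have $N(C,T)\ge 3$: in that case there are finitely many trifurcation-type vertices in $C$ with respect to $T$, giving the finite canonical marker and the standard contradiction (this is exactly what the paper calls the ``standard application of TMTP'' at the end). But for $N(C,T)\in\{1,2\}$ no such marker exists. If $N(C,T)=1$ there is no cut-set separating ends at all, so your suggested ``$\w$-heaviest vertices in an appropriate invariant finite cut-set produced by the ends'' is vacuous. If $N(C,T)=2$ there are infinitely many finite cuts in $C$ separating the two nonvanishing ends, with no canonical choice; along a bi-infinite structure with weights bounded between two positive constants there is simply no $\Gamma$-equivariant way to single out a finite subset. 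The auxiliary randomness in $\htau_\go$ does not help here: it lives on the vertices of $\tau(C,C')$ and breaks ties between \emph{neighboring clusters}, not between \emph{cut-sets within $C$}.

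The paper closes this gap by a completely different mechanism: insertion tolerance together with the combinatorics of the heavy-neighboring graph $H_\cN$ from \cref{neighb_rel}. Knowing each heavy cluster has infinitely many heavy $\cN$-neighbors, one locates a configuration of clusters $C,C_1,C_2,C_3,D_1,D_2,D_3$ as in \cref{fig:quadruple}, argues that on at least one side all cross-touchings are empty, and then inserts a bounded number of edges to merge several clusters into one. The point is that this merged cluster inherits, within a \emph{single} touching class, the nonvanishing ends contributed by the formerly separate heavy touching sets---yielding $\ge 3$ nonvanishing ends with positive probability and contradicting the assumption. Only after reaching $\ge 3$ does the TMTP argument you describe kick in to give infinitely many and no isolated ends. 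Your outline skips the insertion step entirely, and without it the jump from $1$ to $3$ cannot be made.
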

\begin{proof}
Let $A$ be the event where there are heavy-neighboring clusters but none of the clusters admits a touching class along which it has $\ge 3$ $\w$-nonvanishing ends. Towards contradiction assume that $A$ happens with positive probability. In fact, without loss of generality we may assume that $A$ holds a.s. Indeed, since the conclusion of the theorem is invariant, it is enough to prove that it holds on every invariant event $B\subseteq A$ w.p.p. By the same proof as that of \cite[Lemma~3.6]{LyonsSchramm99}, $\pr_p$ on $B$ is still insertion and deletion tolerant. 
Thus, restricting to $B$, we may assume without loss of generality that $B$ holds a.s. It is enough to prove that the conclusion holds with positive probability.

We claim that a.s.\ there are four clusters $C_1,C_2,D_1,D_2$ such that $$\w(\htau_\go(C_1,D_1))=\w(\htau_\go(C_2,D_2))=\infty$$ and the remaining touching \textit{sets} (which contain corresponding touching classes) between these clusters are empty.
Let $\cN$ represent the heavy neighboring relation between clusters, which is equivalent to the corresponding class in $\htau_\go$ being heavy (by \cref{htau}). By \cref{neighb_rel}, we know that any heavy cluster $C$ has infinitely many such $\cN$-neighbors. Say three of them are called $C_1$, $C_2$, and $C_3$. 
We show that there exist clusters $D_i,\ i \in \{1,2,3\}$, such that $D_i$ is an $\cN$-neighbor of $C_i$, but is not a $\cN$-neighbor of $C_j$ for $j \neq i$ as in~\cref{fig:quadruple}. 
To see that, suppose that $C_1$ and $C_2$ share all their $\mathcal{N}$-neighbors.
Choose two common $\cN$-neighbors of $C_1$ and $C_2$ (neither of which is $C$), call them $D$ and $D'$. Then the subgraph of $H_\cN$ with vertices $C,C_1,C_2,D,D'$ is 3-connected. Insert edges between $C$ and $C_1$, $C_1$ and $D$, and $C_1$ and $D'$ to create a new cluster $C'$ in $\go$. By \cref{htau} $C_2$ has $\ge 3$ $\w$-nonvanishing ends along  $\htau_\go(C_2,C')$ with positive probability, contradiction with the assumption that $A$ holds a.s.

\tikzset{
		Big dot/.style={
			circle, inner sep=0pt, 
			minimum size=2.5mm, fill=black
		}
	}
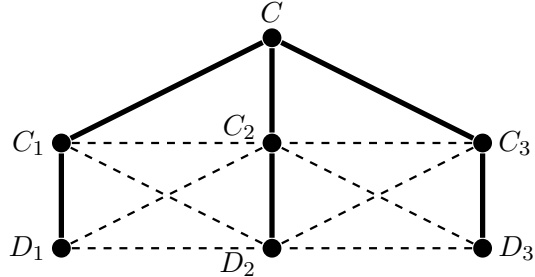
\begin{figure}[htb]
\begin{center}
	\begin{tikzpicture}[thick, scale=0.7]
	\node[Big dot] (C) at (0,4) {};
        \draw (0,4.1) node[above] {$C$};
	\node[Big dot] (C1) at (-4,2) {};
        \draw (-4.1,2) node[left] {$C_1$};
        \node[Big dot] (C2) at (0,2) {};
        \draw (-0.1,2.3) node[left] {$C_2$};
        \node[Big dot] (C3) at (4,2) {};
        \draw (4.1,2) node[right] {$C_3$};
        \node[Big dot] (D1) at (-4,0) {};
        \draw (-4.1,0) node[left] {$D_1$};
        \node[Big dot] (D2) at (0,0) {};
        \draw (-0.1,-0.3) node[left] {$D_2$};
        \node[Big dot] (D3) at (4,0) {};
        \draw (4.1,0) node[right] {$D_3$};
   
        \draw [line width=0.7mm] (C) -- (C1);
        \draw [line width=0.7mm] (C) -- (C2);
        \draw [line width=0.7mm] (C) -- (C3);
        \draw [line width=0.7mm] (D1) -- (C1);
        \draw [line width=0.7mm] (D2) -- (C2);
        \draw [line width=0.7mm] (D3) -- (C3);
        \draw [line width=0.3mm,dashed] (C1) -- (C2);
        \draw [line width=0.3mm,dashed] (C1) -- (D2);
        \draw [line width=0.3mm,dashed] (D1) -- (D2);
        \draw [line width=0.3mm,dashed] (D1) -- (C2);
        \draw [line width=0.3mm,dashed] (C3) -- (C2);
        \draw [line width=0.3mm,dashed] (C3) -- (D2);
        \draw [line width=0.3mm,dashed] (D3) -- (D2);
        \draw [line width=0.3mm,dashed] (D3) -- (C2);
   	\end{tikzpicture}	
\end{center}
\caption{Subgraph induced by heavy clusters $C$, $C_i$, and $D_i$ in $H_{\cN}$. Bold edges represent heavy touching sets, while dashed edges represent the remaining neighboring relations in question that cannot be heavy.
}\label{fig:quadruple}
\end{figure}

We would like to show that at least on one side of \cref{fig:quadruple} all touching sets represented by dashed edges are empty.
Suppose now that any of the touching sets $\tau(C_1,C_2)$, $\tau(D_1,C_2)$, $\tau(C_1,D_2)$, $\tau(D_1,D_2)$ is nonempty. Then inserting three edges (first that connects $C_1$ and $D_1$, second that connects $C_2$ and $D_2$, and the last one in the respective nonempty touching set) yields a single cluster $C'$ that contains ${C_1, D_1,C_2,D_2}$ and does not perturb any other cluster $\go$ or touching classes in $\htau_\go$. 
Note that after such operation the touching classes $\htau_\go(C_1,C)$ and $\htau_\go(C_2,C)$ are combined, giving a single class along which $C'$ $\ge 2$ $\w$-nonvanishing ends.
Now if any of the sets $\tau(C_2,C_3)$, $\tau(C_2,D_3)$, $\tau(D_2,C_3)$, $\tau(D_2,D_3)$ are also not empty then inserting two edges (first between $C_3$ and $D_3$ and another one in the respective nonempty touching set) gives a cluster $C''$ that $\ge 3$ $\w$-nonvanishing ends along the touching class $\htau_\go(C'',C)$.

Thus, at least one of the quadruples of touching sets must be empty yielding a desired quadruple of clusters in the claim.
From such a quadruple of clusters one can show that with positive probability there must be two heavy clusters $C$ and $C$ such that the corresponding touching class $\htau_\go(C,C')$ has $\ge3$ $\w$-nonvanishing ends. Contradiction.
Finally, given clusters $C$ and $C'$ such that $C$ has $\ge 3$ $\w$-nonvanishing ends along $\htau_\go(C,C')$, then $C$ has infinitely many and no isolated $\w$-nonvanishing ends along $\htau_\go(C,C')$ a.s.\ by a standard application of TMTP.
\end{proof}

\subsection{Nonhyperfiniteness of $\htau_\go$}\label{sec:non_hf_class}

As we mentioned previously in the corresponding step of the proof of the cluster repulsion for unimodular transitive graphs, \cite[Lemma 2.5]{Timar06neighb}, Tim\'ar constructs an invariant forest on a set of the trifurcations with high expected degree and then applies \cite[Proposition 7.2]{BLPS99inv}. 
The correct analogue of this proposition in the nonunimodular/mcp setting is the generalized Adams' Dichotomy from \cite{AnushRobin}, stated as \cref{thm: Anush-Robin}, which characterizes hyperfiniteness in acyclic mcp graphs as those with at $> 2$ nonvanishing ends in almost every component.

Consequently, the analogue of Tim\'ar's proof would require building a forest on trifurcation points with $> 2$ nonvanishing ends. This poses additional significant difficulty: while the \emph{local} property of having $>2$ expected degree guarantees $>2$ ends, we are not aware of any local property that would guarantee $>2$ nonvanishing ends. So we take a completely different approach and show that $\htau_\go$ is not hyperfinite directly as an IRP (without placing an additional graph structure on it). The core step in the proof is the application a result from \cite{CTTTD} as stated in \cref{thm:nonamenable}.

\begin{thm}\label{non_hf_class}
    Consider a jointly invariant random process $(\go,\htau_\go)$, where $\go$ is a configuration of a Bernoulli$(p)$ bond percolation, $\htau_\go$ is a partition of $\go$ into the touching classes. Assume that a.s.\ there is cluster of $\go$
    that admits a touching class in $\htau_\go$ along which it has $\ge 3$ $\w$-nonvanishing ends. Then $\htau_\go$ is not hyperfinite.
\end{thm}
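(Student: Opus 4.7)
The plan is to transfer the problem to the measured-group-theoretic setting via the cluster graphing construction of \cref{sec:cluster_graphing} and then apply \cref{thm:nonamenable}. Concretely, I would form a joint cluster graphing on a standard probability space $(Y,\nu)$ that factors onto the law of $(\omega,\htau_\omega)$, producing the pair $(\cG^{\mathrm{cl}}, \cR_\htau)$, where $\cG^{\mathrm{cl}}$ is a locally finite graphing (since $G$ is) of the connectedness relation $\cR$ of clusters, and $\cR_\htau\subseteq\cR$ is the subequivalence relation induced by the touching classes. By \eqref{eq:RNweights}, the Radon--Nikodym cocycle $\w_\nu$ of $\cR$ corresponds to the relative weight function $\w$ on $G$ through the component-wise isomorphisms $\varphi_{x,i}$. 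By the first item of \cref{prop:hf_properties}, to conclude that $\htau_\omega$ is not hyperfinite it suffices to show that $\cR_\htau$ is not $\nu$-hyperfinite.

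Next, I would invoke \cref{thm:nonamenable} with $\cS\coloneqq\cR_\htau$, $\cR$ as above, $\cG\coloneqq\cG^{\mathrm{cl}}$, and $\w\coloneqq\w_\nu$. Set
\[
Y'\coloneqq\bigl\{\wt{x}\in Y : \cG^{\mathrm{cl}}\text{ has }\ge 3\ \w_\nu\text{-nonvanishing ends along }[\wt{x}]_{\cR_\htau}\bigr\},
\]
which is $\cR_\htau$-invariant. By \cref{thm:nonamenable}, $\cR_\htau|_{Y'}$ is $\nu$-nowhere hyperfinite. As long as $\nu(Y')>0$, this forces $\cR_\htau$ itself to be non-$\nu$-hyperfinite, since hyperfiniteness is preserved under Borel restriction.

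It then remains to show $\nu(Y')>0$. For each $v\in V$, let $A_v$ denote the event that $v$'s touching class in $\htau_\omega$ has $\ge 3$ $\w$-nonvanishing ends within $v$'s cluster. The hypothesis asserts that a.s.\ some cluster admits such a touching class $T$; since every $v\in T$ sees $T$ as its own touching class, we have $\mathbf{P}\bigl(\bigcup_{v\in V}A_v\bigr)=1$. By countable additivity $\mathbf{P}(A_v)>0$ for some $v\in V$, and by the $\Gamma$-invariance of the joint law this persists for some orbit representative $o_i\in\mvO$. Since $\varphi_{x,i}$ is an isomorphism of relatively weighted graphs mapping the $\cR_\htau$-class of $\pi_i(x)$ onto the touching class of $o_i$ in $\omega(x)$, it preserves $\w$-nonvanishing ends; hence $A_{o_i}$ pulls back to $\pi_i^{-1}(Y')$ in $X$, giving $\mu(\pi_i^{-1}(Y'))>0$, and therefore $\nu(Y')>0$ by \eqref{eq:nu_from_mu}.

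The main subtlety I expect is maintaining a faithful dictionary between ``$\w$-nonvanishing end along a subset of a cluster in $G$'' and ``$\w_\nu$-nonvanishing end along an $\cR_\htau$-class in $\cG^{\mathrm{cl}}$,'' which is what \cref{thm:nonamenable} consumes. This translation is secured by the relatively-weighted isomorphisms $\varphi_{x,i}$ built into the construction, but it is the one place where careful bookkeeping is required. Beyond that, the argument is essentially a single application of \cref{thm:nonamenable}; this is also precisely where the nonunimodular proof parts ways with Tim\'ar's forest-on-trifurcations approach, as highlighted in the discussion preceding the theorem statement.
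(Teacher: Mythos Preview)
Your proposal is correct and follows essentially the same route as the paper's proof: pass to the cluster graphing, identify $\cR_\htau$ as a subequivalence relation of the connectedness relation of $\cG^{\mathrm{cl}}$, use the component-wise weighted isomorphisms $\varphi_{x,i}$ to transfer the $\ge 3$ $\w$-nonvanishing-ends hypothesis to a positive-$\nu$-measure $\cR_\htau$-invariant set, apply \cref{thm:nonamenable}, and conclude via \cref{prop:hf_properties}\eqref{prop:hf_properties_cl}. Your write-up is in fact more explicit than the paper's about why $\nu(Y')>0$, which the paper compresses into a single sentence invoking $\{\varphi_x\}$.
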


\begin{proof}
    Consider a pair $(\cG^{\mathrm{cl}},\cS)$, where $\cG^{\mathrm{cl}}$ is a cluster graphing that factors onto $(\go,\htau_\go)$ and $\cS$ is subequivalence relation $\cR_{\cG^{\mathrm{cl}}}$ induced by $\htau_\go$. Invoking the family $\{\varphi_x\}_{x \in X}$ of component-wise isomorphisms of (relatively) weighted graphs yields $\cG^{\mathrm{cl}}$ has $\ge3$ $\w_\nu$-nonvanishing ends along $\cS$ on some $\cS$-invariant subset of $Y$ with $\nu$-positive measure.  
    By \cref{thm:nonamenable} on this event $\cS$ is $\nu$-nonhyperfinite. The now conclusion follows from \cref{prop:hf_properties}.
\end{proof}

\subsection{\textit{Proof of~\cref{thm:heavyrepulsion}}}

Consider a jointly invariant random process $(\go,\htau_\go)$, where $\go$ is a configuration of a Bernoulli$(p)$ bond percolation, $\htau_\go$ is a partition of $\go$ into the touching classes.
Towards contradiction assume that the conclusion of \cref{thm:heavyrepulsion} does not hold. Let $H(\htau_{\go})$ denote an invariant (locally countable) random graph that consists of the complete graphs on $\htau_{\go}$-classes. 
Label each edge in $H(\htau_{\go})$ by the distance in $C$ between its endpoints and let $H_n(\htau_{\go})$ be the subgraph that contains only edges with the label at most $n$. 
Note that by local finiteness of $G$ each vertex has finitely many vertices at distance at most $n$, thus $H_n(\htau_{\go})$ is necessarily locally finite.
By \cref{non_hf_class} $\htau_\go$ is not hyperfinite. 
Since the joint law of $H(\htau_{\go})$ and the labeling is invariant under the diagonal action of the group this defines an increasing exhaustion of $H(\htau_{\go})$. 
By \cref{prop:hf_properties}.\eqref{prop:hf_properties_incr_union} there is $N\in \bN$ such that $H_N(\htau_{\go})$ is a nonhyperfinite random graph. 

By \cref{thm:pos_Cheeger} and \cref{pos_Cheeger_implies_weighted_grwth} $H_N(\htau_{\go})$ exhibits exponential growth of weights (with respect to the graph metric of $H_N(\htau_\go)$. Note that the labeled distance between any two points $x$ and $y$ in the same connected component of $H_N(\htau_{\go})$ is at least $\dist_{\go}(x,y)$. Thus, there is a $\htau_\go$-class $\htau_\go(C,C')$ that exhibits exponential growth of weights in $C$ along a sequence of annuli $(A_n)_{n \geq 1}$ as in \cref{lem: annuli}.

Following \cite{Timar06neighb} define the following mass transport scheme. Let every vertex $x$ select a geodesic path $\pi(x,y)$ in $C(x)$ to each of its neighbors $y$ in $G$ that are in the same cluster. Enumerate vertices in each geodesic path as $x = x_1, \cdots, x_m = y$, and let $x$ send mass $1/k^2$ to $x_k$ for each $k\in\bN$ and path $\pi(x,y)$. 
This yields an unbalanced tilted mass transport analogously to \cite{Timar06neighb}.
\qed

\bibliographystyle{alpha}
\bibliography{nonunimod} 
\end{document}